\documentclass[11pt,reqno]{amsart}
\usepackage{color}
\usepackage{amsmath, amssymb, amsthm}

\usepackage{mathrsfs}
\usepackage{mathtools}
\usepackage{aliascnt}
\usepackage{fullpage}
\usepackage[noadjust]{cite}
\usepackage[strict=true]{csquotes}
\usepackage{pifont}
\usepackage{tikz}
\usepackage{tikz-cd}
\usepackage{bbm}
\usepackage[T1]{fontenc}
\usepackage{etoolbox}

\usetikzlibrary{arrows.meta}

\usepackage{environ}
\usepackage{framed}
\usepackage{url}
\usepackage[linesnumbered,ruled,vlined]{algorithm2e}
\usepackage[noend]{algpseudocode}
\usepackage[labelfont=bf]{caption}
\usepackage[framemethod=tikz]{mdframed}
\usepackage{appendix}
\usepackage{graphicx}
\usepackage{tcolorbox}
\usepackage{enumerate}
\usepackage[shortlabels]{enumitem}
\usepackage{physics}
\usepackage[colorlinks=true,allcolors=blue]{hyperref}
\usepackage[noabbrev,capitalize,nameinlink]{cleveref}
\allowdisplaybreaks[1]

\apptocmd{\sloppy}{\hbadness 10000\relax}{}{}

\crefname{equation}{}{}
\crefname{algocf}{Algorithm}{Algorithms}
\crefname{conjecture}{Conjecture}{Conjectures}

\AtBeginEnvironment{appendices}{\crefalias{section}{appendix}}
\makeatletter
\renewcommand{\@bibtitlestyle}{%
  \section*{\refname}%
}
\makeatother
\AtBeginEnvironment{thebibliography}{\scriptsize}

\crefformat{enumi}{#2#1#3}
\crefrangeformat{enumi}{#3#1#4 to~#5#2#6}
\crefmultiformat{enumi}{#2#1#3}{ and~#2#1#3}{, #2#1#3}{ and~#2#1#3}

\numberwithin{equation}{section}
\newtheorem{theorem}{Theorem}[section]

\newaliascnt{proposition}{theorem}
\newtheorem{proposition}[proposition]{Proposition}
\aliascntresetthe{proposition}
\crefname{proposition}{Proposition}{Propositions}

\newaliascnt{lemma}{theorem}
\newtheorem{lemma}[lemma]{Lemma}
\aliascntresetthe{lemma}
\crefname{lemma}{Lemma}{Lemmas}

\newaliascnt{corollary}{theorem}
\newtheorem{corollary}[corollary]{Corollary}
\aliascntresetthe{corollary}
\crefname{corollary}{Corollary}{Corollaries}

\crefname{claim}{Claim}{Claims}

\newtheorem*{question*}{Question}

\theoremstyle{definition}

\newtheorem*{definition*}{Definition}

\theoremstyle{remark}
\newtheorem*{remark}{Remark}

\DeclareQuoteStyle{straight}
  {\textquotedbl}{\textquotedbl}
  {\textquotesingle}{\textquotesingle}
\setquotestyle{straight}
\MakeOuterQuote{"}

\newcommand\HS{\mathrm{HS}}
\DeclarePairedDelimiterX{\inner}[2]{\langle}{\rangle}{#1, #2}

\DeclarePairedDelimiterX{\setcond}[2]{\{}{\}}{#1 \;\delimsize\vert\; #2}
\DeclarePairedDelimiterX{\cond}[2]{(}{)}{#1 \;\delimsize\vert\; #2}
\DeclarePairedDelimiterX{\sqcond}[2]{[}{]}{#1 \;\delimsize\vert\; #2}

\newcommand{\mc}{\mathcal}

\newcommand{\Id}{\mathrm{Id}}

\newcommand{\edit}[1]{{\color{blue}#1}}
\newcommand{\constsub}[1]{_{\edit{#1}}}

\newcommand{\eps}{\varepsilon}

\DeclareMathOperator{\dist}{dist}

\renewcommand{\le}{\leqslant}
\renewcommand{\ge}{\geqslant}

\newcommand\R{\mathbb{R}}

\newcommand\PP{\mathbb{P}}
\newcommand\E{\mathbb{E}}
\newcommand\Var{\mathrm{Var}}

\newcommand*{\deq}{\stackrel{law}{=}}

\usepackage{bm}

\allowdisplaybreaks

\DeclareMathOperator{\Span}{span}

\DeclareMathOperator{\Comp}{Comp}
\DeclareMathOperator{\Incomp}{Incomp}
\DeclareMathOperator{\Sparse}{Sparse}
\DeclareMathOperator{\supp}{supp}
\DeclareMathOperator{\sr}{sr}
\DeclareMathOperator{\diag}{diag}
\def\op{\mathrm{op}}
\newcommand{\normop}[1]{\left\lVert #1 \right\rVert_{\op}}

\title{On the smallest singular value of the product of random and deterministic matrices}

\author[Letwin]{Brayden Letwin}
\address{Department of Mathematics, University of Washington, Seattle, WA 98195}
\email{letwin@uw.edu}

\author[Polavarapu]{Achintya Raya Polavarapu}
\address{School of Mathematics, Georgia Institute of Technology, Atlanta, GA 30332}
\email{apolavarapu6@gatech.edu}

\begin{document}
\hypersetup{pageanchor=false}

\begin{abstract}
Let \(A=(a_{ij})\) be an \(n\times n\) real-valued random matrix with
independent, mean-zero, variance-one entries whose fourth moments are uniformly at most $K$. Suppose that there exists \(\kappa \in (0, 1)\) such that the
entries of \(A\) satisfy
\[
  \max_{i,j}\sup_{u \in \R} \PP(\abs{a_{ij} - u} < 1) \le \kappa.
\]
We prove that there are constants \(c,C>0\), depending only on \(K\) and
\(\kappa\), such that for every fixed invertible \(n\times n\) matrix \(M\)
and every \(\varepsilon\ge0\),
\[
  \PP\!\left(s_{\min}(MA) \le \frac{\varepsilon}{\|M^{-1}\|_{\HS}}\right)
  \le C\varepsilon + e^{-cn}.
\]
In the Gaussian case, we also show that the above estimate is sharp in the sense that $\E[s_{\min}(MA)]\asymp \|M^{-1}\|_{\HS}^{-1}.$
\end{abstract}

\maketitle
\thispagestyle{empty}
\begingroup
\pagestyle{empty}
\endgroup

\setcounter{page}{1}
\hypersetup{pageanchor=true}

\section{Introduction} \label{sec:1}

\medskip\noindent For an $n\times n$ random matrix \(A\) with real-valued entries, its smallest singular value is given by
\[
  s_{\min}(A)=\min_{x\in S^{n-1}}\|Ax\|_2.
\]
For matrices with i.i.d. standard Gaussian entries, it is known that \(s_{\min}(A)\) is of order \(1/\sqrt n\), and precise distributional
estimates in this case go back to Edelman~\cite{Edelman88,Edelman91}; see also Szarek~\cite{Szarek91} for related works. Going beyond the Gaussian case into a broader class of random matrices, Rudelson and Vershynin \cite{RV08,RV08b} proved that if \(A\) has i.i.d. mean-zero and
variance-one sub-gaussian entries then
\begin{equation} \label{eq:1.1}
   \PP\left(s_{\min}(A) \le \frac{\varepsilon}{\sqrt{n}}\right)
  \le C \varepsilon + e^{-cn} 
\end{equation}
for some constants $C, c > 0$ depending only on the sub-gaussian constant of $a_{ij}$.
Their seminal argument follows the now classical compressible/incompressible
decomposition (see \cref{sec:2} for definitions). In their argument, the compressible part of the sphere is
handled by a net argument, and the incompressible part is reduced to a
Littlewood--Offord anti-concentration problem, where the arithmetic structure of
the coefficients becomes relevant; see also \cite{TV09}. Subsequent work on the smallest singular value includes the case of rectangular random matrices~\cite{RV09}, matrices with independent columns~\cite{LPRT05,AGLPT08,GLPTJ17}, sparse matrices~\cite{LR12,BR17}, and
heavy-tailed matrices or matrices whose entries share different distributions~\cite{RT18,Livshyts21,LTV21,DF24}.

\medskip\noindent In this paper, we study the non-asymptotic behaviour of
\(s_{\min}(MA)\), where \(M\) is a fixed invertible \(n\times n\) matrix and
\(A\) is an \(n\times n\) random matrix. The fundamental question is: after
multiplying \(A\) by \(M\), what replaces the classical scale \(1/\sqrt n\)
in lower-tail estimates for \(s_{\min}(A)\)? Even for diagonal matrices \(M\),
and even when \(A\) is Gaussian, this is not obvious a priori. Our main result,
\cref{thm:1.1}, answers this question for a broad class of random matrices and gives the natural generalization of
\cref{eq:1.1}. For a real-valued random variable
\(\xi\) and \(r>0\), let
\(\mathcal{L}(\xi,r)=\sup_{u\in\R}\PP(|\xi-u| < r)\)
denote its L\'evy concentration function.

\begin{theorem} \label{thm:1.1}
Let \(A=(a_{ij})\) be an \(n\times n\) real-valued random matrix with independent entries satisfying for all \(i,j\),
\[
  \E[a_{ij}] = 0,\quad \E[a_{ij}^2] = 1,\quad \E[a_{ij}^4] \le K,
  \quad \mathcal{L}(a_{ij},1)\le \kappa
\]
where \(K\ge1\) and \(\kappa\in(0,1)\). Further, let \(M\) be a fixed real-valued invertible \(n\times n\) matrix.
Then there exist constants \(C,c>0\), depending only on \(K\) and \(\kappa\),
such that for all $\varepsilon \ge 0$
\begin{equation} \label{eq:1.2}
   \PP\Bigl(s_{\min}(MA) \le \frac{\varepsilon}{\|M^{-1}\|_{\HS}}\Bigr)
  \le C \varepsilon + e^{-cn}. 
\end{equation}
\end{theorem}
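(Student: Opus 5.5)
We first reduce to a diagonal $M$. Write the singular value decomposition $M = U D V$, with $U, V$ orthogonal and $D=\diag(d_1,\dots,d_n)$, $d_i>0$. Then $s_{\min}(MA)=s_{\min}(DVA)$ and $\|M^{-1}\|_{\HS}^2=\sum_i d_i^{-2}$.

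We cannot simply absorb $V$ into $A$, since $VA$ no longer has independent entries. We therefore work with the characterization
\[
s_{\min}(MA)=\min_{x\in S^{n-1}}\|MAx\|_2 ,
\]
and use the identity
\[
s_{\min}(MA)^{-1}=\|A^{-1}M^{-1}\|_{\op}.
\]
This suggests switching to the row/column geometry of $A^{-1}$. Equivalently, $s_{\min}(MA)=s_{\min}(A^{T}M^{T})$, and we bound $\|M^{T}A y\|$ over unit vectors $y$ (the rows of $A$ remain independent).

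The plan is the Rudelson--Vershynin scheme adapted to the weighted norm. Set $B=MA$. Then
\[
s_{\min}(B)=\min_{\|z\|=1}\|A^{T}M^{T}z\|,
\]
since singular values are preserved under transposition. Writing $w=M^{T}z/\|M^{T}z\|$ gives
\[
\|A^{T}M^{T}z\|=\|M^{T}z\|\,\|A^{T}w\|.
\]
This suggests instead the negative second moment identity: for invertible $B$, $\sum_i \dist(B_i,H_i)^{-2}=\|B^{-1}\|_{\HS}^2$, where $B_i$ are the columns of $B$. The columns of $B=MA$ are $MA_i$, which are independent random vectors with covariance $MM^{T}$.

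We split the sphere into compressible and incompressible vectors. We need to control $\min_x\|MAx\|$ over compressible $x$ at the scale $\|M^{-1}\|_{\HS}^{-1}$. This is easy, because
\[
\|MAx\|\ge \|Ax\|/\|M^{-1}\|_{\op}\ge \|Ax\|/\|M^{-1}\|_{\HS},
\]
and the classical compressible bound gives $\PP(\inf_{x\in\Comp}\|Ax\|\le c)\le e^{-cn}$ under the fourth-moment and concentration assumptions. Since no operator norm bound is available with only four moments, this step uses the Livshyts-type net on compressible vectors, or a tensorization argument that needs only $\mathcal L(a_{ij},1)\le\kappa$ and a truncation on a large set of coordinates.

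For incompressible $x$, we use the invertibility-via-distance lemma. With probability at least $1-e^{-cn}$ we have $\|MAx\|\ge$ something only if $x$ has many coordinates with $|x_k|\ge c/\sqrt n$, and for such $k$
\[
\|MAx\|\ge |x_k|\,\dist(MA_k, MH_k),
\]
where $H_k=\Span(A_j:j\ne k)$. Let $Y_k$ be the unit normal to $MH_k$. Then
\[
\dist(MA_k,MH_k)=|\langle Y_k, MA_k\rangle|=|\langle M^{T}Y_k, A_k\rangle|,
\]
and $Y_k$ is independent of $A_k$. The key is then to show that $|\langle M^{T}Y,A_k\rangle|\gtrsim \|M^{T}Y\|\varepsilon$ fails with probability at most $C\varepsilon+e^{-cn}$. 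We also need $\|M^{T}Y\|\ge \sqrt n/\|M^{-1}\|_{\HS}$ typically, which gives the $\sqrt n\cdot(1/\sqrt n)$ cancellation. Here $M^{T}Y$ is orthogonal to $A_j$ for $j\ne k$, so $u=M^{T}Y/\|M^{T}Y\|$ is the normal to $H_k$, whose structure (incompressibility and large LCD) follows from the Rudelson--Vershynin argument. Small ball probability via LCD then gives the $C\varepsilon+e^{-cn}$ bound for $|\langle u,A_k\rangle|\le\varepsilon'$.

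The remaining issue is the size of $\|M^{T}Y\|$. We have $1=\|Y\|=\|M^{-T}M^{T}Y\|$, so
\[
\|M^{T}Y\|=1/\|M^{-T}u\|.
\]
We therefore need $\|M^{-T}u\|\lesssim \|M^{-1}\|_{\HS}/\sqrt n$ with probability $1-C\varepsilon-e^{-cn}$, or rather an averaged version. We handle this by averaging over $k$ rather than seeking a uniform bound. A Markov-type argument on $\sum_k \|M^{-T}u_k\|^2$, combined with the expected value of a quadratic form in an incompressible "random-like" normal, should give
\[
\E\|M^{-T}u\|^2\approx \|M^{-1}\|_{\HS}^2/n,
\]
at least on a good event. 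This is the \textbf{main obstacle}: the normal $u_k$ is not uniformly distributed, so bounding $\|M^{-T}u_k\|$ requires either a delocalization estimate for $u_k$ in the eigenbasis of $M^{-1}M^{-T}$ or a negative second moment identity. I would first try the identity
\[
\sum_k \dist(MA_k,MH_k)^{-2}=\|A^{-1}M^{-1}\|_{\HS}^2,
\]
combined with a pigeonhole argument over the many incompressible coordinates $k$.
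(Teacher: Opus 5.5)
You have the same reduction as the paper. Compressible vectors are handled by $\|MAx\|_2\ge\|Ax\|_2/\|M^{-1}\|_{\HS}$ plus Livshyts' compressible bound. For incompressible vectors you correctly reach $\dist(MA_k,MH_k)=|\langle u_k,A_k\rangle|/\|M^{-\top}u_k\|_2$, with $u_k$ the unit normal to $H_k$ (Lemma~4.3). Conditioning on the other columns and using a small-ball bound $C(\varepsilon+e^{-cn})$ for $\langle u_k,A_k\rangle$, everything reduces to the averaged estimate
\[
  \E\bigl[\mathbf 1_{\mathcal A}\|M^{-\top}u_k\|_2\bigr]\le C\,\frac{\|M^{-1}\|_{\HS}}{\sqrt n}
\]
for events $\mathcal A$ determined by the other columns.

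That estimate is exactly the step you leave open, and it is the paper's main new ingredient, so the proposal has a genuine gap. Since $\E\|M^{-\top}u_k\|_2^2=\tr\bigl(M^{-1}M^{-\top}\E[u_ku_k^\top]\bigr)$, what is needed is the Loewner bound $\E[u_ku_k^\top]\preceq\frac{C}{n}\Id$, i.e.\ $\E\langle u_k,v\rangle^2\le C/n$ uniformly over fixed unit vectors $v$. Your suggested routes do not give this:
\begin{itemize}
\item Incompressibility and large LCD of $u_k$ say nothing about its component along a prescribed direction; an incompressible vector can have $\langle u,v\rangle^2\ge 1/2$. You also give no mechanism for delocalization in the eigenbasis of $M^{-1}M^{-\top}$.
\item The negative second moment identity is circular. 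Since $\|A^{-1}M^{-1}\|_{\HS}\ge\|A^{-1}M^{-1}\|_{\op}=s_{\min}(MA)^{-1}$, bounding it is harder than the original problem. Bounding it term by term fails outright, because $\E\,\dist(MA_k,MH_k)^{-2}=\infty$ already for Gaussian entries.
\item A pigeonhole over $k$ would need $\sum_k\|M^{-\top}u_k\|_2^2\lesssim\|M^{-1}\|_{\HS}^2$, which is the same missing estimate.
\end{itemize}

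The paper fills this gap (Theorem~4.4) by revealing the other columns one at a time. Let $P_m$ be the projection onto the orthogonal complement of the first $m$ revealed columns, $d=\operatorname{rank}P_m$, and $\xi_m=P_mA_{m+1}$. Then $P_{m+1}=P_m-\xi_m\xi_m^\top/\|\xi_m\|_2^2$ on $\{\xi_m\ne0\}$. The fourth-moment hypothesis gives $\Var(\|\xi_m\|_2^2\mid\text{past})\le(K+2)d$ and $\E\langle x,A_{m+1}\rangle^4\le K+3$. From these, $\E\bigl[\xi_m\xi_m^\top/\|\xi_m\|_2^2\mid\text{past}\bigr]\succeq\frac{1-O(d^{-1/4})}{d}P_m$. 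Iterating over $m$ (with $d=n-m$ on the good event) gives $\E[\Pi_k\mathbf 1_{\{\dim H_k=n-1\}}]\preceq\prod_{d\ge d_0}^{n}\bigl(1-\frac{1-O(d^{-1/4})}{d}\bigr)\Id\preceq\frac{C}{n}\Id$, where $\Pi_k$ is the projection onto $H_k^\perp$. Then $\mathbf 1_{\mathcal A}u_ku_k^\top\preceq\Pi_k$ and Cauchy--Schwarz yield the first-moment bound. This is the only place the fourth moment is used.

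A smaller point concerns the numerator. The Rudelson--Vershynin LCD argument relies on $\|A\|_{\op}=O(\sqrt n)$ with probability $1-e^{-cn}$ in its net step, and that is not available under only a fourth-moment assumption. The paper instead uses the Livshyts--Tikhomirov--Vershynin machinery, which avoids this requirement.
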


\medskip\noindent In the Gaussian case, this scale is sharp in expectation. More precisely, \cref{prop:5.expectation-upper}, combined with \cref{thm:1.1},
implies that for matrices $A$ with i.i.d. standard Gaussian entries one has
\[
  \E[s_{\min}(MA)]\asymp \|M^{-1}\|_{\HS}^{-1}.
\]

\begin{remark}
\noindent The diagonal case is already of independent interest. If \(M=\diag(d_1,\dots,d_n)\), then multiplying by \(M\) simply rescales the rows of \(A\). Since singular values are invariant under transposition, the same observation also applies to column-scaled models. In this way, \cref{thm:1.1} yields lower-tail bounds for random matrices with independent entries whose row variances or column variances need not all be the same.
\end{remark}

\medskip\noindent We note that results such as
\cite{LTV21,DF24} treat more general entry distributions than those considered in this paper. Specifically, they don't require a uniform bound on the fourth moments. Despite a series of attempts, we were unable to remove this assumption from \cref{thm:1.1}, and we leave this as an interesting
problem for future work. It is also worth noting that the operator norm of products of random and deterministic matrices $MA$ was studied before; we refer to Vershynin \cite{Vershynin11}, 
who proved that $\E[\normop{MA}] \lesssim \|M\|_{\HS}+\sqrt n\,\|M\|_{\op}$ under different assumptions on $A$. 

\medskip\noindent The paper is organized as follows. In \cref{sec:2} we present notation and give a proof overview. \Cref{sec:3} establishes an estimate that deals with the compressible parts of the sphere. After this, \cref{sec:4} develops the incompressible part of the sphere and concludes the proof of \cref{thm:1.1}. Finally, \cref{sec:5} develops the Gaussian case, proving complementary upper and lower bounds at the same scale and, as a consequence, showing that $\E[s_{\min}(MA)]\asymp \|M^{-1}\|_{\HS}^{-1}.$

\subsection*{Acknowledgements}

\medskip\noindent B.L. and A.P. thank Galyna Livshyts for posing this problem and for her guidance throughout this work.

\section{Preliminaries and proof overview} \label{sec:2}

\medskip\noindent As with many modern results in random matrix theory, the proof follows the compressible and incompressible decomposition argument of Rudelson--Vershynin stated briefly above. Our key new result is \cref{thm:4.4}, which states that if \(\Pi_j\) is the random projection onto the orthogonal complement of the subspace spanned by all columns excluding the \(j\)-th column of \(A\), then
\begin{equation} \label{eq:2.1}
  \E[\Pi_j] \asymp \frac{1}{n}\Id,
\end{equation}
where the comparison is taken in Loewner order. The lower estimate in \cref{eq:2.1} will not be used in the proof of \cref{thm:1.1}; it is included to show sharpness of \cref{thm:4.4}. The upper estimate will be used to prove that
\[
  \E\!\left[\mathbf 1_{\mathcal A_j}\|M^{-\top}z_j\|_2\right]
  \le C\constsub{4.4}\frac{\|M^{-1}\|_{\HS}}{\sqrt n},
\]
where \(\mathcal A_j\) is any event measurable with respect to the \(\sigma\)-algebra generated by all columns except the \(j\)-th column of \(A\). This leads to the incompressible estimate and then to \cref{thm:1.1}. 
\subsection{Notation and conventions} \label{subsec:2.1}

\medskip\noindent We use $c, C > 0$ for positive constants whose values may change from line
to line, and occasionally within the same displayed formula. Constants carrying
a subscript, such as \(c\constsub{3.1}\) or
\(C\constsub{4.4}\), are attached to the indicated result. Unless a different
dependence is explicitly recorded, these constants depend only on \(K\) and
\(\kappa\). We write \(\|\cdot\|_{\op}\) and
\(\|\cdot\|_{\HS}\) for the operator and Hilbert--Schmidt norms,
\(M^{-\top}=(M^{-1})^\top\), and \([n]=\{1,\dots,n\}\). For a nonzero matrix \(B\),
we consider at some point its stable rank
\[
  \sr(B)=\frac{\|B\|_{\HS}^2}{\|B\|_{\op}^2}.
\]
For an event \(\mathcal E\), \(\mathbf 1_{\mathcal E}\) denotes its indicator. We say that a
random vector \(X\in\R^n\) is isotropic if \(\E[X]=0\) and \(\E[XX^\top]=\Id\).
For a vector
\(x\), we write its support as \(\supp(x)=\{i:x_i\ne0\}\). For a subspace \(H\), we write the distance from $x$ to $H$ as
\(\dist(x,H)=\inf_{h\in H}\|x-h\|_2\). For symmetric matrices \(B,C\), we write
\(B\preceq C\) if \(C-B\) is positive semi-definite.

\medskip\noindent Throughout the remainder of the paper, we will assume first that $n$ is sufficiently large (depending on $K$ and $\kappa$). Proving \cref{thm:1.1} for the remaining finitely many $n$ are then handled using known smallest singular value
estimates for when $M = \Id$ along with the inequality
\[
  s_{\min}(MA)\ge s_{\min}(M)\,s_{\min}(A)
  = \frac{s_{\min}(A)}{\|M^{-1}\|_{\op}}.
\]
Since \(\|M^{-1}\|_{\HS}\le \sqrt n\,\|M^{-1}\|_{\op}\), this loses at most a
factor \(\sqrt n\), which is harmless when \(n\) ranges
over a fixed finite set.

\subsection{Overview of the proof} \label{subsec:2.2}

\medskip\noindent Let \(\delta,\rho\in(0,1)\). Following Rudelson--Vershynin, define the set of
\(\delta n\)-sparse unit vectors by
\[
  \Sparse(\delta)=\{x\in S^{n-1}:|\supp(x)|\le \delta n\}
\]
and the corresponding compressible/incompressible subsets of the sphere by
\[
  \Comp(\delta,\rho)
  =
  \{x\in S^{n-1}:\dist(x,\Sparse(\delta))\le \rho\},
  \qquad
  \Incomp(\delta,\rho)
  =
  S^{n-1}\setminus \Comp(\delta,\rho).
\]
\noindent This then gives the compressible/incompressible decomposition of the sphere
\[
  S^{n-1}=\Comp(\delta,\rho)\cup\Incomp(\delta,\rho).
\]
In order to prove \cref{thm:1.1}, we must show that $\norm{MA u}_2$ is not too small for unit vectors $u$. In order to do this, we will prove this for both compressible and incompressible vectors, and then use a union bound. On the compressible vectors \(\Comp(\delta,\rho)\), we can immediately reduce to an estimate for \(A\):
\[
  \|MAu\|_2 \ge \frac{1}{\|M^{-1}\|_{\HS}}\|Au\|_2,
\]
which is immediate by the definition of the operator norm along with $\norm{M^{-1}}_{\op} \le \norm{M^{-1}}_{\HS}$.
The right-hand side depends separately on \(M\) and \(\|Au\|_2\), so we may
apply the bound for the compressible vectors obtained in~\cite[Lemma~5.3]{Livshyts21}.

\medskip\noindent The incompressible side of the argument is more delicate. On
\(\Incomp(\delta,\rho)\), write \(Y_j=Ae_j\) for the columns of $A$, \(H_j=\Span\{Y_k:k\ne j\}\), and
\(\mathcal F_j=\sigma(Y_k:k\ne j)\), so that \(\mathcal F_j\) captures all
columns except \(Y_j\). First, observe that
\[
  u_jMY_j
  =
  MAu-\sum_{k\ne j}u_kMY_k,
\]
and also note that the above sum over $k \neq j$ lies in $MH_j$. Therefore, dividing by $\abs{u_j}$ we may see by definition that
\[
  \dist(MY_j,MH_j)\le \frac{\|MAu\|_2}{|u_j|}.
\] 
The goal is then show that if $\norm{MAu}_2$ is small, then $\dist(MY_j, MH_j)$ is also small and for this, we use a standard result about incompressible vectors, being that, for many $j$ one has $\abs{u_j} \gtrsim 1/\sqrt{n}$. This reduces the proof to showing that $\dist(MY_j, MH_j)$ being small is rare. We may focus on the event \(\dim H_j^\perp=1\), for which we will let \(z_j\) be a unit vector spanning
\(H_j^\perp\). Then we may calculate
\begin{equation} \label{eq:2.2}
\dist(MY_j, MH_j)
  =
  \frac{|\langle z_j, Y_j\rangle|}{\|M^{-\top}z_j\|_2},
\end{equation}
as appearing in \cref{lem:4.3}. Controlling the numerator of \cref{eq:2.2} is then standard, and the new ingredient needed is the estimate for the denominator of \cref{eq:2.2}
\[
  \E\!\left[\mathbf 1_{\mathcal A_j}\|M^{-\top}z_j\|_2\right]
  \le C\constsub{4.4}\frac{\|M^{-1}\|_{\HS}}{\sqrt n},
\]
for every \(\mathcal F_j\)-measurable event \(\mathcal A_j\). This is \cref{thm:4.4}, the rest of the argument after follows easily.

\section{The compressible bound} \label{sec:3}

\medskip\noindent We will use the result \cite[Lemma~5.3]{Livshyts21}, which gives a lower bound for $\norm{Au}_2$ on the set of compressible vectors. One may routinely check that the hypotheses of \cite[Lemma~5.3]{Livshyts21} are satisfied by those of \cref{thm:1.1}.
\begin{lemma}[{\cite[Lemma~5.3]{Livshyts21}}]\label{lem:3.1}
Under the hypotheses of \cref{thm:1.1}, there exist
\(\delta\constsub{3.1},\rho\constsub{3.1}\in(0,1)\) and \(c\constsub{3.1}>0\), depending only on \(\kappa\), such
that
\[
  \PP\!\Bigl(
    \inf_{u \in \Comp(\delta\constsub{3.1},\rho\constsub{3.1})}
    \|Au\|_2
    \le c\constsub{3.1}\sqrt{n}
  \Bigr)
  \le e^{-c\constsub{3.1}n}.
\]
\end{lemma}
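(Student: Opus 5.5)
This lemma is cited from \cite[Lemma~5.3]{Livshyts21}, so the only task is to confirm that the hypotheses of \cref{thm:1.1} imply those used there. For completeness, here is the self-contained argument I would give. It follows the standard two-step scheme: first a single-vector small-ball estimate, then a net argument over sparse vectors, which extends to their $\rho$-neighbourhoods.

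\textbf{Step 1 (fixed vector).} Fix $u\in S^{n-1}$. The coordinates $(Au)_i=\sum_j a_{ij}u_j$ are independent. From $\mathcal L(a_{ij},1)\le\kappa$, a standard tensorization/anti-concentration argument gives
\[
\mathcal L\bigl((Au)_i,c_0\bigr)\le 1-c_1
\]
for some $c_0,c_1>0$ depending only on $\kappa$. This can be obtained from the Kolmogorov--Rogozin type inequality, or more simply by symmetrization together with a Paley--Zygmund estimate. At no point do I need the fourth moment. Tensorizing over the $n$ independent rows then yields
\[
\PP\bigl(\|Au\|_2\le c_2\sqrt n\bigr)\le e^{-c_3 n}.
\]

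\textbf{Step 2 (net plus operator norm).} The obstacle is the lack of subgaussian tails, so I cannot assume $\|A\|_{\op}\lesssim\sqrt n$ with overwhelming probability. Following Livshyts, I would replace the operator norm by a weaker quantity that is nonetheless good with probability $1-e^{-cn}$. Specifically, I would control $\|A\|_{\HS}\le C\sqrt{n}\cdot\sqrt n$ only in an averaged sense, or better, use Livshyts's "random rounding"/net construction. This yields a net $\mathcal N$ for $\Comp(\delta,\rho)$ of cardinality $(C/\delta\rho)^{\delta n}\binom{n}{\delta n}$ such that, with probability $1-e^{-cn}$, every $u\in\Comp$ has a net point $v$ with
\[
\|A(u-v)\|_2\le C\rho\sqrt n .
\]
The key inputs there are $\E\|A\|_{\HS}^2=n^2$ and the Markov inequality applied to a rounded combination.

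\textbf{Step 3 (conclusion).} Choose $\delta,\rho$ small enough that the cardinality $e^{C\delta\log(1/\delta\rho)n}$ is beaten by $e^{-c_3n}$, and that $C\rho\le c_2/2$. A union bound then concludes the proof. Step 2 is the main difficulty, and it is exactly where I defer to the cited lemma.
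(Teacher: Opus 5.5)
Insofar as your proposal rests on \cite[Lemma~5.3]{Livshyts21}, it matches the paper exactly. The paper's whole proof of \cref{lem:3.1} is that citation, plus the remark that the hypotheses of \cref{thm:1.1} routinely imply those of the cited lemma. You call that check the only task but never carry it out; the paper also leaves it as routine. Your Step~1 is fine: moment-free anti-concentration of a single row, followed by a Chernoff bound over the $n$ independent rows.

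The self-contained sketch, however, does not stand on its own, and Step~2 is a genuine gap rather than a deferral of routine details.

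First, the only probabilistic input you name is Markov's inequality with $\E\|A\|_{\HS}^2=n^2$. This gives $\PP(\|A\|_{\HS}^2>Ln^2)\le 1/L$, which is a constant. Under the hypotheses of \cref{thm:1.1}, the bound $\|A\|_{\HS}\le Cn$ genuinely fails with probability only polynomially small in $n$: a single entry of size $Cn$ suffices. So no argument that needs this bound on the good event can reach $1-e^{-cn}$. The exponential bound must come from independence. For instance, each row $R_i$ satisfies $\|R_i\|_2^2\le Ln$ with probability at least $1-1/L$. Hence, outside an event of probability $e^{-c(L)n}$, the set $I$ of such rows has $|I|\ge(1-2/L)n$, and the row-submatrix $A_I$ has $\|A_I\|_{\HS}^2\le Ln^2$. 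The counting behind Step~1 (at least $c_1n/2$ rows with $|(Av)_i|\ge c_0$) survives the loss of $2n/L$ rows once $L$ is large.

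Second, your net has essentially the size of a $\rho$-net of $\Sparse(\delta)$, with error $C\rho\sqrt n$. That is the classical Rudelson--Vershynin net. Its error bound comes from applying $\|A\|_{\op}\le C\sqrt n$ to the $\rho$-tail of $u$, which is exactly the input you said is unavailable. Random rounding instead rounds every coordinate of $u$, tail included, to the lattice $\frac{\gamma}{\sqrt n}\mathbb{Z}^n$, independently across coordinates. Two averages over the rounding then control the result:
\begin{itemize}
\item The average of $\|A_I(u-v)\|_2^2$ is at most $\frac{\gamma^2}{4n}\|A_I\|_{\HS}^2\le\frac{L\gamma^2}{4}n$.
\item Since $\|u_{S^c}\|_1\le\rho\sqrt n$ off the sparse support $S$, the rounded point has on average at most $\delta n+\rho n/\gamma$ nonzero coordinates.
\end{itemize}
Thus the approximation error is governed by the mesh $\gamma$, not by $\rho$. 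The net consists of non-sparse lattice points, with cardinality $\exp(C\eta\log\frac{C}{\gamma\eta}\,n)$ where $\eta\approx\delta+\rho/\gamma$. The parameters must be fixed in order: $L$ large, then $\gamma\ll c_2/\sqrt L$, then $\delta,\rho\ll\gamma$. Here $\rho$ enters only to keep the rounded tail sparse, not through a condition like your $C\rho\le c_2/2$.
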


\medskip\noindent For the rest of the paper, fix \(\delta\constsub{3.1},\rho\constsub{3.1}\) from \cref{lem:3.1}. The next proposition is the only estimate we need for compressible vectors, and it follows immediately from \cref{lem:3.1} together with the elementary lower bound \(\|Mx\|_2\ge \|M^{-1}\|_{\HS}^{-1}\|x\|_2\).

\begin{proposition} \label{prop:3.2}
Under the hypotheses of \cref{thm:1.1}, with the constants from
\cref{lem:3.1},
\[
  \PP\!\Bigl(
    \inf_{u \in \Comp(\delta\constsub{3.1},\rho\constsub{3.1})} \|MAu\|_2
    \le \frac{c\constsub{3.1}\sqrt{n}}{\|M^{-1}\|_{\HS}}
  \Bigr)
  \le e^{-c\constsub{3.1}n}.
\]
\end{proposition}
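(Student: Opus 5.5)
The plan is to reduce \cref{prop:3.2} to \cref{lem:3.1} through a deterministic pointwise inequality, with no further probabilistic input. The key fact is that for every $x\in\R^n$,
\[
  \|Mx\|_2 \ge s_{\min}(M)\,\|x\|_2 = \frac{\|x\|_2}{\|M^{-1}\|_{\op}} \ge \frac{\|x\|_2}{\|M^{-1}\|_{\HS}}.
\]
The first step uses that $M$ is invertible, so $x = M^{-1}(Mx)$ and therefore $\|x\|_2\le \|M^{-1}\|_{\op}\|Mx\|_2$. The second step uses the elementary comparison $\|M^{-1}\|_{\op}\le \|M^{-1}\|_{\HS}$, since the largest singular value is bounded by the $\ell_2$ norm of all of them.

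Next, apply this with $x = Au$ for each $u\in\Comp(\delta\constsub{3.1},\rho\constsub{3.1})$. This gives $\|MAu\|_2 \ge \|M^{-1}\|_{\HS}^{-1}\|Au\|_2$ pointwise on the probability space, and taking the infimum over compressible $u$ preserves the inequality:
\[
  \inf_{u\in\Comp(\delta\constsub{3.1},\rho\constsub{3.1})}\|MAu\|_2 \ge \frac{1}{\|M^{-1}\|_{\HS}}\inf_{u\in\Comp(\delta\constsub{3.1},\rho\constsub{3.1})}\|Au\|_2 .
\]

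Consequently, the event in \cref{prop:3.2}, on which the left-hand infimum is at most $c\constsub{3.1}\sqrt n/\|M^{-1}\|_{\HS}$, is contained in the event that $\inf_{u}\|Au\|_2\le c\constsub{3.1}\sqrt n$. By \cref{lem:3.1} the latter has probability at most $e^{-c\constsub{3.1}n}$, which gives the claim with the same constants $\delta\constsub{3.1},\rho\constsub{3.1},c\constsub{3.1}$; these do not depend on $M$.

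There is no genuine obstacle here. The only point to check is that the hypotheses of \cref{thm:1.1} place us within the scope of \cref{lem:3.1}, and the paper asserts this. Measurability of the infimum causes no trouble, because $u\mapsto\|MAu\|_2$ is continuous and $\Comp(\delta\constsub{3.1},\rho\constsub{3.1})$ is compact, so the infimum may be taken over a countable dense subset.
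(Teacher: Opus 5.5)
Your proposal is correct and follows the paper's proof step for step. It uses the same pointwise bound $\|Mx\|_2\ge \|x\|_2/\|M^{-1}\|_{\op}\ge \|x\|_2/\|M^{-1}\|_{\HS}$ with $x=Au$, the resulting inclusion of events, and an application of \cref{lem:3.1}; your measurability remark is an extra detail the paper leaves implicit.
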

\begin{proof}
Since \(\|M^{-1}\|_{\op}\le \|M^{-1}\|_{\HS}\), we have
\[
  \|Mx\|_2 \ge \frac{1}{\|M^{-1}\|_{\op}}\|x\|_2
  \ge \frac{1}{\|M^{-1}\|_{\HS}}\|x\|_2
\]
for every \(x\in\R^n\). Hence for all $u \in \R^n$
\[
  \|MAu\|_2 \ge \frac{1}{\|M^{-1}\|_{\HS}}\|Au\|_2
\]
Therefore
\[
  \left\{
    \inf_{u \in \Comp(\delta\constsub{3.1},\rho\constsub{3.1})}\|MAu\|_2
    \le
    \frac{c\constsub{3.1}\sqrt n}{\|M^{-1}\|_{\HS}}
  \right\}
  \subseteq
  \left\{
    \inf_{u \in \Comp(\delta\constsub{3.1},\rho\constsub{3.1})}\|Au\|_2
    \le
    c\constsub{3.1}\sqrt n
  \right\}.
\]
Apply \cref{lem:3.1} to complete the proof.
\end{proof}

\section{The incompressible bound} \label{sec:4}

\medskip\noindent Now we will focus on controlling the incompressible vectors. Throughout this section, we will write $Y_j = Ae_j$ for the $j$-th column of $A$. The columns $Y_1,\dotsc,Y_n$ are mutually independent and isotropic. For each
$j \in [n]$, define
\[
  W_j = [Y_1\,\cdots\,Y_{j-1}\,Y_{j+1}\,\cdots\,Y_n],
  \qquad
  \mathcal{F}_j = \sigma(Y_k : k \ne j),
  \qquad
  H_j = \Span\{Y_k : k \ne j\}.
\]
Further, we let \(\mathcal{R}_j = \{\dim(H_j^\perp) = 1\}
= \{\operatorname{rank}(W_j)=n-1\}\) be the event that $H_j^{\perp}$ is one-dimensional. Let \(\Pi_j\) denote the orthogonal projection onto \(H_j^\perp\), and fix an \(\mathcal F_j\)-measurable unit vector \(z_j\in H_j^\perp\). On the event \(\mathcal R_j\), this vector is unique up to sign.

\medskip\noindent We will begin by using the property from \cite[Lemma~3.4]{RV08} stating that for every $u \in \Incomp(\delta\constsub{3.1},\rho\constsub{3.1})$ there are many coordinates of $u$ with size $\gtrsim 1/\sqrt{n}$.
\begin{lemma}{\cite[Lemma~3.4]{RV08}} \label{lem:4.1} 
For every $u \in \Incomp(\delta\constsub{3.1},\rho\constsub{3.1})$ there exists a set
$J(u) \subset [n]$ with $|J(u)| \ge \tfrac{1}{2}\rho\constsub{3.1}^2 \delta\constsub{3.1} n$ such that
\[
  \frac{\rho\constsub{3.1}}{\sqrt{2n}} \le |u_j| \le \frac{1}{\sqrt{\delta\constsub{3.1} n}},
  \qquad j \in J(u).
\]
\end{lemma}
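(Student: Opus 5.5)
This is the standard Rudelson--Vershynin spreading lemma, and I would prove it by a direct counting argument. The idea is to split the coordinates of \(u\) into "large", "small" and "medium" ones, and to show that the medium ones must carry a definite amount of mass. Write \(\delta=\delta\constsub{3.1}\) and \(\rho=\rho\constsub{3.1}\), and fix \(u\in\Incomp(\delta,\rho)\). Define
\[
  \sigma_1=\Bigl\{j:|u_j|>\tfrac{1}{\sqrt{\delta n}}\Bigr\},
  \qquad
  \sigma_2=\Bigl\{j:|u_j|<\tfrac{\rho}{\sqrt{2n}}\Bigr\}.
\]
We then take \(J(u)=[n]\setminus(\sigma_1\cup\sigma_2)\), so the two-sided bound on \(|u_j|\) holds on \(J(u)\) by construction. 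It remains to show that \(J(u)\) is large.

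\emph{The large coordinates.} Since \(\|u\|_2=1\), Chebyshev's counting gives \(|\sigma_1|\cdot\frac{1}{\delta n}<1\), so \(|\sigma_1|\le\delta n\). Let \(y\) be the vector that agrees with \(u\) on \(\sigma_1\) and vanishes elsewhere. If \(y\neq 0\), then \(y/\|y\|_2\in\Sparse(\delta)\). Incompressibility gives \(\dist(u,y/\|y\|_2)>\rho\).

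\emph{Passing from the normalized to the unnormalized vector.} The only point needing care is to use \(y\) itself rather than its normalization. Since \(u-y\perp y\), the nearest point to \(u\) on the ray through \(y\) is \(y\) itself. Hence \(\|u-y\|_2\ge\dist(u,y/\|y\|_2)>\rho\); if \(y=0\), then \(\|u-y\|_2=1>\rho\) trivially. Therefore
\[
  \sum_{j\notin\sigma_1}u_j^2=\|u-y\|_2^2>\rho^2 .
\]

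\emph{The small and medium coordinates.} The coordinates in \(\sigma_2\) contribute at most \(n\cdot\frac{\rho^2}{2n}=\frac{\rho^2}{2}\) to this sum. Hence
\[
  \sum_{j\in J(u)}u_j^2>\frac{\rho^2}{2}.
\]
Each term with \(j\in J(u)\) is at most \(\frac{1}{\delta n}\), so
\[
  |J(u)|\ge\frac{\rho^2}{2}\,\delta n,
\]
as claimed.

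There is no serious obstacle here. The only step needing attention is the projection step above, which lets us work with the unnormalized \(y\) instead of \(y/\|y\|_2\).
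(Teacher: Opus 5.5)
\textbf{There is a gap: the projection step you singled out runs in the wrong direction.} Your splitting into $\sigma_1$, $\sigma_2$ and $J(u)$ is the Rudelson--Vershynin argument the paper cites. Since $u-y\perp y$, the vector $y$ is the orthogonal projection of $u$ onto the line $\R y$. Writing $t=\|y\|_2$, this gives $\|u-y\|_2\le\|u-y/t\|_2$, not $\ge$, so the lower bound $\rho$ on the distance to $y/t$ does not pass to $y$. Explicitly, $\|u-y\|_2^2=1-t^2$ while $\|u-y/t\|_2^2=2(1-t)$. In this paper's convention, where $\Sparse(\delta)\subset S^{n-1}$, incompressibility therefore only gives $t<1-\rho^2/2$, that is, $\sum_{j\notin\sigma_1}u_j^2>\rho^2-\rho^4/4$.

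\textbf{The bound $\sum_{j\notin\sigma_1}u_j^2>\rho^2$ can actually fail.} Take $\delta<1/5$ and let $s$ be the largest integer strictly below $(1-\rho^2)\delta n$. Put $u_j^2=(1-\rho^2)/s$ on $s$ coordinates and $u_j^2=\rho^2/(n-s)$ on the others. For large $n$, every restriction of $u$ to at most $\delta n$ coordinates has squared norm below $(1-\rho^2/2)^2$, so $u\in\Incomp(\delta,\rho)$. Yet $\sigma_1$ is exactly those $s$ coordinates and $\sum_{j\notin\sigma_1}u_j^2=\rho^2$. The cited argument avoids this because there $\Sparse(\delta)$ consists of all $\delta n$-sparse vectors, not just unit ones, so $y$ itself is sparse and $\|u-y\|_2>\rho$ is immediate. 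Carried through correctly, your route gives only $|J(u)|>(\rho^2/2-\rho^4/4)\delta n$. That would suffice for \cref{prop:4.7}, but it is not the stated constant.

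\textbf{How to recover the stated constant.} Compare with the best $k$-term restriction rather than with $\sigma_1$ alone. Let $k=\lfloor\delta n\rfloor$, and write $u|_S$ for the vector that agrees with $u$ on $S$ and vanishes elsewhere. For $|S|\le k$ with $u|_S\neq0$, the unit vector $u|_S/\|u|_S\|_2$ lies in $\Sparse(\delta)$ and is at distance $\sqrt{2-2\|u|_S\|_2}$ from $u$. Hence incompressibility gives $\|u|_S\|_2^2\le\|u|_S\|_2<1-\rho^2/2$ for every $S$ with $|S|\le k$. Now suppose $|J(u)|<\frac12\rho^2\delta n$; there are two cases.
\begin{itemize}
\item If $|\sigma_1|+|J(u)|\le k$, take $S\supseteq\sigma_1\cup J(u)$ with $|S|=k$ and $S^c\subseteq\sigma_2$. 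Since $S^c\neq\emptyset$, $\|u|_{S^c}\|_2^2<(n-k)\rho^2/(2n)\le\rho^2/2$, so $\|u|_S\|_2^2>1-\rho^2/2$.
\item Otherwise $|\sigma_1|\ge k+1-|J(u)|>(1-\rho^2/2)\delta n$. Your counting step gives $|\sigma_1|\le k$, so $S=\sigma_1$ is admissible, and $\|u|_{\sigma_1}\|_2^2\ge|\sigma_1|/(\delta n)>1-\rho^2/2$.
\end{itemize}
Both cases contradict incompressibility, so $|J(u)|\ge\frac12\rho^2\delta n$.
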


\medskip\noindent Next, we will need a lemma stating that $\mc{R}_j$ occurs with exponentially large probability, so that we may reduce to understanding \cref{eq:2.2} on the event $\mc{R}_j$.
\begin{lemma} \label{lem:4.2}
There exists \(c\constsub{4.2}>0\), depending only on \(K\) and \(\kappa\),
such that for every $j \in [n]$,
\[
  \PP(\mathcal{R}_j^c) \le e^{-c\constsub{4.2}n}.
\]
\end{lemma}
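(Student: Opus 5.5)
The event $\mathcal R_j^c$ says exactly that the $n\times(n-1)$ matrix $W_j$ has a nontrivial kernel, i.e. there is a unit vector $v\in S^{n-2}$ with $W_jv=0$. Equivalently, $W_j^\top$ has columns that are linearly dependent, or $s_{\min}(W_j)=0$. The plan is therefore to bound $\PP(\exists\, v\in S^{n-2}: W_jv=0)$ by the same compressible/incompressible decomposition, now applied on $S^{n-2}$ to the $n\times(n-1)$ matrix $W_j$. This matrix has independent, mean-zero, variance-one entries with $\mathcal L(\cdot,1)\le\kappa$ and uniformly bounded fourth moments. Note that $W_j$ is the matrix $A$ with one column deleted and $M$ plays no role.

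The most economical route is to invoke the rectangular smallest singular value estimate. Since $W_j$ has aspect ratio $N=n$, $m=n-1$, the known bound (Rudelson--Vershynin \cite{RV09} in the subgaussian case, and its extension to entries with only a uniform anti-concentration bound, e.g. the results in \cite{Livshyts21,LTV21}) gives
\[
\PP\bigl(s_{\min}(W_j)\le \varepsilon(\sqrt n-\sqrt{n-2})\bigr)\le (C\varepsilon)^{2}+e^{-cn}.
\]
Letting $\varepsilon\to0$ yields $\PP(s_{\min}(W_j)=0)\le e^{-cn}$.

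If I instead want to stay within the tools already quoted, I would argue directly. Take the compressible part: the proof of \cref{lem:3.1} (\cite[Lemma~5.3]{Livshyts21}) works verbatim for $n\times(n-1)$ matrices, since it uses only a tensorization of the row-wise small-ball estimate and a net over sparse vectors. It gives $\inf_{v\in\Comp}\|W_jv\|_2\ge c\sqrt n$ with probability $1-e^{-cn}$, so no compressible $v$ lies in the kernel. For incompressible $v$ with $W_jv=0$, the row space of $W_j$ is deficient. Passing to the transpose, $W_j^\top$ is an $(n-1)\times n$ matrix. Generically its kernel is one-dimensional, spanned by $z_j$, and rank deficiency means that a second, independent vector lies in that kernel. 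I would instead apply the distance argument: if $W_jv=0$ with $|v_k|\ge c/\sqrt n$, then $Y_k\in\Span\{Y_l: l\ne j,k\}$. Its probability is bounded by averaging over $k\in J(v)$ from \cref{lem:4.1}, exactly as in the Rudelson--Vershynin ``invertibility via distance'' lemma, reducing to $\PP(\dist(Y_k,H_{jk})=0)$, where $H_{jk}$ has codimension at least $2$.

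The main obstacle is this last step: showing $\PP(Y_k\in H)\le e^{-cn}$ for a random subspace $H$ of codimension at least $2$, independent of $Y_k$. The Littlewood--Offord bound via a normal vector $z\in H^\perp$ only gives $\PP(\langle z,Y_k\rangle=0)\le \mathcal L(\langle z,Y_k\rangle,0)$. This probability is exponentially small only when $z$ has large LCD, which requires the structure theorem that random normals are unstructured with probability $1-e^{-cn}$. Since that is the heavy part of \cite{RV08,RV09,Livshyts21}, I would ultimately cite the rectangular estimate, after checking that its hypotheses are implied by those of \cref{thm:1.1}, rather than reproving it.
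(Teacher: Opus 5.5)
Your final route, reading $\mathcal R_j^c$ as $\{s_{\min}(W_j)=0\}$ for the $n\times(n-1)$ matrix $W_j$ and citing a rectangular estimate, can work, but the references you name do not cover these hypotheses. \cite{RV09} assumes i.i.d.\ subgaussian entries. Here the entries are only independent, with bounded fourth moments, and may have different distributions. \cite[Theorem~1.1]{LTV21} is a theorem about square matrices. The rectangular estimate for independent, non-identically distributed, uniformly anti-concentrated entries is the one in \cite{DF24}, which the paper cites in its introduction. With that reference, your bound and the passage $\varepsilon\to0$ are fine. One small slip: $W_j^\top$ is $(n-1)\times n$, so its columns are always linearly dependent; you mean the columns of $W_j$. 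Your fallback direct argument, as you note yourself, stalls at a step as hard as the lemma itself, since $\{Y_k\in H_{jk}\}\subseteq\mathcal R_j^c$. So it adds nothing.

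The paper avoids rectangular theory entirely. It deletes the last row of $W_j$, giving a square $(n-1)\times(n-1)$ matrix $\widetilde W_j$ whose entries still satisfy the standing hypotheses. Any nonzero $v$ with $W_jv=0$ also satisfies $\widetilde W_jv=0$, so $\mathcal R_j^c\subseteq\{s_{\min}(\widetilde W_j)=0\}$. Then \cite[Theorem~1.1]{LTV21} at $\varepsilon=0$ gives $\PP(\mathcal R_j^c)\le 2e^{-c(n-1)}\le e^{-c\constsub{4.2}n}$ for large $n$. The extra strength of a rectangular bound, the $\varepsilon^2$ decay for the tall matrix, is irrelevant for a probability-zero event. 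The row-deletion trick is therefore the economical choice: it uses only the square theorem the paper already relies on elsewhere.
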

\begin{proof}
Let $\widetilde W_j$ be the $(n-1)\times(n-1)$ sub-matrix of \(W_j\) obtained by deleting the last row. Then clearly by inclusions one has
\[
  \PP(\mathcal{R}_j^c)
  \;\le\;
  \PP\!\bigl(s_{\min}(\widetilde{W}_j)=0\bigr).
\]
The entries of \(\widetilde W_j\) satisfy the standing assumptions, so
\cite[Theorem~1.1]{LTV21} at \(\varepsilon=0\) gives a constant
\(c>0\), depending only on \(K\) and \(\kappa\), such that
\[
  \PP\!\bigl(s_{\min}(\widetilde{W}_j)=0\bigr)
  \le
  2e^{-c(n-1)}.
\]
Since throughout we work first with \(n\) sufficiently large, after decreasing
\(c\) and relabeling the constant as \(c\constsub{4.2}\),
\[
  2e^{-c(n-1)}\le e^{-c\constsub{4.2}n}.
\]
The finitely many smaller dimensions are absorbed into the constants as explained in
\cref{subsec:2.1}. This proves the claim.
\end{proof}

\medskip\noindent On the event \(\mathcal R_j\), the distance from $MY_j$ to the subspace $MH_j$ has an explicit one-dimensional formula.
\begin{lemma} \label{lem:4.3}
On the event $\mathcal{R}_j$ one has the following:
\[
  \dist(MY_j, MH_j) = \frac{|\langle z_j, Y_j\rangle|}{\|M^{-\top}z_j\|_2}.
\]
\end{lemma}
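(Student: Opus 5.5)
The plan is to describe the orthogonal complement of \(MH_j\) explicitly and then apply the standard formula for the distance from a point to a hyperplane. Throughout we work on \(\mathcal R_j\). There \(\dim H_j = n-1\), and since \(M\) is invertible, \(MH_j\) is also a hyperplane in \(\R^n\). So \((MH_j)^\perp\) is one-dimensional, and it suffices to produce a single nonzero vector spanning it.

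The candidate is \(w = M^{-\top} z_j\). For any \(h \in H_j\) we have
\[
  \langle w, Mh\rangle = \langle M^{-\top}z_j, Mh\rangle = \langle z_j, M^{-1}Mh\rangle = \langle z_j, h\rangle = 0,
\]
because \(z_j \in H_j^\perp\). Hence \(w \perp MH_j\). Moreover \(w \neq 0\), since \(z_j\) is a unit vector and \(M^{-\top}\) is invertible. It follows that \((MH_j)^\perp = \Span\{w\}\).

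For any \(x \in \R^n\), the distance to the hyperplane \(MH_j\) is the length of the projection of \(x\) onto its normal line:
\[
  \dist(x, MH_j) = \frac{|\langle w, x\rangle|}{\|w\|_2}.
\]
Take \(x = MY_j\). By the same adjoint computation as above, \(\langle M^{-\top}z_j, MY_j\rangle = \langle z_j, Y_j\rangle\). Substituting gives
\[
  \dist(MY_j, MH_j) = \frac{|\langle z_j, Y_j\rangle|}{\|M^{-\top}z_j\|_2},
\]
which is the claimed identity.

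No step here is a real obstacle. The only point needing care is that \(\mathcal R_j\) guarantees \(MH_j\) has codimension exactly one, so that \(w\) spans the whole orthogonal complement rather than just a line inside it. The sign ambiguity in \(z_j\) on \(\mathcal R_j\) does not affect the formula, since both numerator and denominator are unchanged under \(z_j \mapsto -z_j\).
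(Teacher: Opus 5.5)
Your proof is correct and follows the paper's route: you show via the adjoint identity that \(M^{-\top}z_j\) is normal to \(MH_j\), then compute the distance as \(|\langle M^{-\top}z_j, MY_j\rangle|/\|M^{-\top}z_j\|_2\). You are slightly more explicit than the paper in noting that \(\mathcal R_j\) and the invertibility of \(M\) make \(MH_j\) a hyperplane, which is what justifies the one-line distance formula.
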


\begin{proof}
Recall that $z_j$ spans $H_j^{\perp}$ on the event $\mc{R}_j$. For \(y=Mh\in MH_j\), since $z_j$ is orthogonal to $H_j$ one has that
\[
  \langle M^{-\top}z_j, y\rangle
  = \langle M^{-\top}z_j, Mh\rangle
  = \langle z_j, h\rangle = 0,
\]
so $M^{-\top}z_j \perp MH_j$. Hence
\[
  \dist(MY_j, MH_j)
  = \left|\left\langle
      \frac{M^{-\top}z_j}{\|M^{-\top}z_j\|_2},\, MY_j
    \right\rangle\right|
  = \frac{|\langle z_j, Y_j\rangle|}{\|M^{-\top}z_j\|_2}. \qedhere
\]
\end{proof}

\medskip\noindent As discussed in the proof overview \cref{subsec:2.2} we will need to control the formula appearing in \cref{lem:4.3}. The numerator is standard, however the denominator is new. We control this using \cref{thm:4.4}, which states that
if $\Pi_j$ is defined as above, as the orthogonal projection onto $H_j^{\perp}$ then one has that $\E[\Pi_j]$ is comparable to $\frac{1}{n}\Id$ in Loewner order by absolute constants. We will only need the upper bound for the remainder of the paper, but we believe that this result
is of independent interest, so we also proved a lower bound.

\begin{theorem} \label{thm:4.4}
Under the hypotheses of \cref{thm:1.1}, there exist
\(c\constsub{4.4},C\constsub{4.4} > 0\), depending only on \(K\) and
\(\kappa\), such that for every $j \in [n]$,
\[
  \frac{c\constsub{4.4}}{n}\Id
  \preceq
  \mathbb{E}[\Pi_j]
  \preceq
  \frac{C\constsub{4.4}}{n}\,\Id.
\]
Consequently, for every $\mathcal{F}_j$-measurable event $\mathcal{A}_j$,
\[
  \mathbb{E}\!\left[
    \mathbf{1}_{\mathcal{A}_j}\|M^{-\top}z_j\|_2
  \right]
  \le C\constsub{4.4}\frac{\|M^{-1}\|_{\HS}}{\sqrt{n}}.
\]
\end{theorem}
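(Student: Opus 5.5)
The plan is to reduce both Loewner bounds to estimates for a single fixed direction. Fix a unit vector \(v\in\R^n\). Since \(\Pi_j\) is an orthogonal projection,
\[
  \langle \E[\Pi_j]v,v\rangle=\E\|\Pi_j v\|_2^2=\E\,\dist(v,H_j)^2 ,
\]
so it suffices to prove \(c/n\le \E\,\dist(v,H_j)^2\le C/n\) uniformly in \(v\). Off the event \(\mathcal R_j\) the bound \(\dist(v,H_j)\le1\) suffices, because \(\PP(\mathcal R_j^c)\le e^{-c\constsub{4.2}n}\) by \cref{lem:4.2}. On \(\mathcal R_j\) we have \(\dist(v,H_j)^2=\langle z_j,v\rangle^2\).

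To compute this inner product, write \(z_j=a v+w\) with \(w\perp v\). The constraint \(W_j^\top z_j=0\) becomes
\[
  a\,g+W_j^\top w=0,\qquad g:=W_j^\top v=(\langle Y_k,v\rangle)_{k\ne j}\in\R^{n-1}.
\]
Hence \(|a|\,\|g\|_2\le \|W_j\|_{\op}\|w\|_2\le\|W_j\|_{\op}\), which gives
\[
  \langle z_j,v\rangle^2=a^2\le \min\Bigl(1,\frac{\|W_j\|_{\op}^2}{\|g\|_2^2}\Bigr).
\]
Two inputs are then needed.

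\textbf{Concentration of \(\|g\|_2^2\).} The coordinates \(\langle Y_k,v\rangle\) are independent, with mean zero and variance one. Their fourth moments are bounded by \(CK\), since \(\E\langle Y,v\rangle^4\le 3\sum_i v_i^4\E a_i^4+3(\sum_i v_i^2)^2\). Therefore \(\Var\|g\|_2^2\le CKn\), and Chebyshev's inequality gives \(\PP(\|g\|_2^2<n/2)\le CK/n\). This is where the uniform fourth-moment assumption is used.

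\textbf{Second moment of the operator norm.} We need \(\E\|W_j\|_{\op}^2\le C(K)n\). I would obtain this from Latała's bound
\[
  \E\|W\|_{\op}\lesssim \max_i\E\|{\rm row}_i\|_2+\max_j\E\|{\rm col}_j\|_2+\Bigl(\sum\E a_{ik}^4\Bigr)^{1/4},
\]
or from its second-moment analogue. The last term is at most \(K^{1/4}\sqrt n\). Each row or column norm squared equals \(n\pm O(\sqrt{Kn})\) in \(L^2\), so its maximum is \(O(Kn)\) in expectation.

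Combining these, for the upper bound:
\[
  \E\langle z_j,v\rangle^2\mathbf 1_{\mathcal R_j}\le \PP(\|g\|_2^2<n/2)+\frac{2}{n}\cdot\frac{\E\|W_j\|_{\op}^2}{n}\le \frac{C}{n},
\]
and adding \(e^{-c\constsub{4.2}n}\le 1/n\) for the event \(\mathcal R_j^c\) yields \(\E[\Pi_j]\preceq \frac{C\constsub{4.4}}{n}\Id\).

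\textbf{Consequence.} Since \(z_jz_j^\top\preceq\Pi_j\) always holds (including off \(\mathcal R_j\)), Cauchy--Schwarz gives
\[
  \E[\mathbf 1_{\mathcal A_j}\|M^{-\top}z_j\|_2]
  \le\bigl(\E\,\mathrm{tr}(M^{-1}M^{-\top}z_jz_j^\top)\bigr)^{1/2}
  \le\bigl(\mathrm{tr}(M^{-1}M^{-\top}\E\Pi_j)\bigr)^{1/2}
  \le \sqrt{C\constsub{4.4}/n}\,\|M^{-1}\|_{\HS}.
\]
After enlarging \(C\constsub{4.4}\) so that \(C\constsub{4.4}\ge1\), this is the claimed bound. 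The indicator of \(\mathcal A_j\) is simply dropped, so its \(\mathcal F_j\)-measurability is not even needed here.

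\textbf{Lower bound (expected main obstacle).} The reverse direction is harder because a crude bound on \(a^2\) via \(\|G^{-1}\|_{\op}\) loses a factor of \(n\). Here \(G=W_j^\top Q\), where \(Q\) is an isometric basis of \(v^\perp\), and exactly
\[
  a^2=\frac{1}{1+\|G^{-1}g\|_2^2}\quad\text{on }\mathcal R_j .
\]
By Jensen, it suffices to show that \(\|G^{-1}g\|_2^2\le Cn\) on an event of probability at least \(1/2\). My first attempt would be a different route. On \(\mathcal R_j\), \(\dist(Y_j,H_j)^2=\langle z_j,Y_j\rangle^2\), and \(\E[\langle z_j,Y_j\rangle^2\mid\mathcal F_j]=1\) by isotropy. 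I would try to combine this with the anti-concentration hypothesis \(\mathcal L(a_{ij},1)\le\kappa\) and an incompressibility argument for \(z_j\): using \cref{lem:3.1} applied to \(W_j^\top\), show that \(z_j\) is incompressible with overwhelming probability, so that by \cref{lem:4.1} it has \(\gtrsim n\) coordinates of size \(\gtrsim n^{-1/2}\). This gives the lower bound immediately for coordinate directions \(v=e_i\), after averaging over the coordinates \(i\) and using a symmetrization/comparison between coordinates. Extending it to arbitrary \(v\) is the step I expect to require the most work, and I am least sure that this route closes.
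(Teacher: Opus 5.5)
Your upper bound has a genuine gap. Since $g=W_j^\top v$ with $\|v\|_2=1$, you always have $\|g\|_2\le\|W_j\|_{\op}$. Hence $\min\bigl(1,\|W_j\|_{\op}^2/\|g\|_2^2\bigr)=1$ identically, and the inequality $|a|\,\|g\|_2\le\|W_j\|_{\op}$ says nothing about $a^2=\langle z_j,v\rangle^2$. Your final display only appears to give $C/n$ because of a slip. On $\{\|g\|_2^2\ge n/2\}$ your bound is $a^2\le 2\|W_j\|_{\op}^2/n$, whose expectation is $\frac2n\E\|W_j\|_{\op}^2=O(1)$, not $\frac2n\cdot\frac1n\E\|W_j\|_{\op}^2$.

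The loss is structural, not a matter of constants. The vector $w$ is essentially $z_j$ itself, on which $W_j^\top$ vanishes. So $\|W_j^\top w\|_2=|a|\,\|g\|_2$ is of order $1$, a factor $\sqrt n$ below the worst case $\|W_j\|_{\op}\|w\|_2$. In terms of your exact identity $a^2=(1+\|G^{-1}g\|_2^2)^{-1}$, the upper bound requires $\|G^{-1}g\|_2^2$ to be of order $n$ with sufficiently high probability. That would need upper bounds on the small singular values of $G$, together with non-trivial overlap of $g$ with the corresponding singular vectors. Moreover, $g=W_j^\top v$ and $G=W_j^\top Q$ are not independent outside the Gaussian case. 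None of this is supplied, and it is not routine. The Chebyshev bound for $\|g\|_2^2$ and the Lata\l a bound are fine but do not help.

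The paper avoids all of this by revealing $Y_1,\dots,Y_{n-1}$ one at a time (taking $j=n$). Let $P_m$ be the projection onto $\Span(Y_1,\dots,Y_m)^\perp$ and $\xi_m=P_mY_{m+1}$. Then $\E[P_{m+1}\mathbf 1_{G_{m+1}}\mid\mathcal G_m]\preceq\mathbf 1_{G_m}(P_m-B_m)$, where $B_m=\E[\xi_m\xi_m^\top\|\xi_m\|_2^{-2}\mathbf 1_{\{\xi_m\ne0\}}\mid\mathcal G_m]$. Concentration of $\|\xi_m\|_2^2=Y_{m+1}^\top P_mY_{m+1}$ around $d=\operatorname{rank}P_m$ gives $B_m\succeq\frac{1-\eta_d}{d}P_m$ with $\eta_d=O(d^{-1/4})$; this is where the fourth moment is used. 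The telescoping product $\prod_{d=d_0}^{n}\bigl(1-\frac{1-\eta_d}{d}\bigr)$ is then $O(1/n)$.

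The lower bound runs the same recursion in reverse. For large $d$ it uses $B_m\preceq\frac{1+Cd^{-1/4}}{d}P_m$, via the Rudelson--Vershynin small-ball bound for $\|P_mY_{m+1}\|_2$. For small $d$ it uses $B_m\preceq(1-\alpha)P_m$.

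Your lower-bound sketch would not close as written. Incompressibility of $z_j$ only controls the average $\frac1n\sum_i\PP(i\in J(z_j))$, not each coordinate separately, since the entries are not exchangeable. Even a lower bound on every diagonal entry of $\E[\Pi_j]$ would not yield $\E[\Pi_j]\succeq\frac cn\Id$, because a rank-one PSD matrix can have all diagonal entries equal to $\frac1n$.

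Your derivation of the consequence from the upper bound is correct and is the paper's trace/Cauchy--Schwarz argument. It inherits the gap above.
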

\begin{proof}
\textbf{Upper bound.}
Fix \(j\in[n]\). We reveal the columns other than the original \(Y_j\) one at a
time. To lighten notation, we will assume without loss of generality that $j = n$ and thus deal with these columns as \(Y_1,\dots,Y_{n-1}\) for
the duration. Set
\[
  P_0=\Id,\qquad G_0=\Omega,\qquad
  \mathcal G_0=\{\emptyset,\Omega\},\qquad A_0=\Id.
\]
For \(1\le m\le n-1\), let \(P_m\) be the orthogonal projection onto
\(\Span(Y_1,\dots,Y_m)^\perp\), and set
\[
  G_m=\{\operatorname{rank}([Y_1,\dots,Y_m])=m\},
  \qquad
  \mathcal G_m=\sigma(Y_1,\dots,Y_m),
  \qquad
  A_m=\E[P_m\mathbf 1_{G_m}].
\]
With these definitions, $G_m$ is the event that the first $m$ columns are linearly independent, and $A_m$ is the average projection of $P_m$, but only on the event $G_m$. Observe that by construction one has
\(P_{n-1}=\Pi_j\). For \(0\le m\le n-2\), set
\(\xi_m=P_mY_{m+1}\), the component of \(Y_{m+1}\) lying in
\(\Span(Y_1,\dots,Y_m)^\perp\). One may then verify that
\begin{equation}
  \label{eq:4.1}
  P_{m+1} = P_m - \frac{\xi_m \xi_m^\top}{\|\xi_m\|_2^2}\,\mathbf{1}_{\{\xi_m \ne 0\}},
  \qquad G_{m+1} = G_m \cap \{\xi_m \ne 0\}.
\end{equation}
Define the \(\mathcal G_m\)-measurable matrix
\begin{equation}
  \label{eq:4.2}
  B_m
  =
  \mathbb{E}\!\left[
    \frac{\xi_m\xi_m^\top}{\|\xi_m\|_2^2}\,
    \mathbf{1}_{\{\xi_m \ne 0\}}
    \,\middle|\, \mathcal G_m
  \right].
\end{equation}
Since \(Y_{m+1}\) is independent of \(\mathcal G_m\), conditioning on
\(\mathcal G_m\) gives through \eqref{eq:4.1} and \eqref{eq:4.2} that
\[
\begin{aligned}
  \mathbb{E}[P_{m+1}\mathbf{1}_{G_{m+1}} \mid \mathcal{G}_m]
  &=
  \mathbf{1}_{G_m}
  \left(
    P_m\,\PP(\xi_m\ne0\mid\mathcal G_m)-B_m
  \right) \\
  &\preceq \mathbf{1}_{G_m}(P_m - B_m),
\end{aligned}
\]
where last inequality only follows from
\(\PP(\xi_m\ne0\mid \mathcal G_m)\le1\).
Therefore, by the total law, we may uncondition and observe that
\begin{equation}
  \label{eq:4.3}
  A_{m+1}
  \preceq
  \mathbb{E}[\mathbf{1}_{G_m}(P_m - B_m)].
\end{equation}

\medskip\noindent We now lower bound \(B_m\), uniformly in \(m\). To do this, we work conditionally on \(\mathcal G_m\), so \(P_m\) is
fixed, and take \(x\in S^{n-1}\cap\operatorname{Im}(P_m)\). By
\eqref{eq:4.2},
\[
  \langle x, B_mx \rangle
  = \mathbb{E}\!\left[
    \frac{\langle x,\xi_m \rangle^2}{\|\xi_m\|_2^2}\,
    \mathbf{1}_{\{\xi_m \ne 0\}}
    \,\middle|\, \mathcal G_m
  \right].
\]
Since \(x\in\operatorname{Im}(P_m)\), one has
\(\langle x,\xi_m\rangle=\langle x,Y_{m+1}\rangle\). Let
\(d=\operatorname{rank}(P_m)\) and
\(\mathcal A_d=\{|\|\xi_m\|_2^2-d|\le d^{3/4}\}\). Since
\(\|\xi_m\|_2^2=Y_{m+1}^\top P_mY_{m+1}\), and since \(P_m\) is fixed after
conditioning on \(\mathcal G_m\) while \(Y_{m+1}\) remains isotropic,
\[
  \E[\|\xi_m\|_2^2\mid\mathcal G_m]
  =
  \tr\!\left(P_m\,\E[Y_{m+1}Y_{m+1}^\top\mid\mathcal G_m]\right)
  =
  \tr(P_m\Id)
  =
  \tr(P_m)
  =
  d.
\]
Writing \(P_m=(p_{ij})\), a computation gives
\[
  \mathbb{E}[\|\xi_m\|_2^4\mid\mathcal G_m]
  =
  (\tr P_m)^2 + 2\tr(P_m^2)
  + \sum_{i=1}^n (\mathbb{E}[(Y_{m+1})_i^4]-3)p_{ii}^2.
\]
Since \(\tr(P_m)=\tr(P_m^2)=d\), \(\sum_i p_{ii}^2\le d\), and \(\E[(Y_{m+1})_i^4]\le K\), this gives
\[
  \mathbb{E}[\|\xi_m\|_2^4\mid\mathcal G_m] \le d^2 + (K+2)d,
\]
so the preceding two displays and Chebyshev give
\[
  \Var(\|\xi_m\|_2^2\mid\mathcal G_m)\le (K+2)d,
  \qquad
  \PP(\mathcal A_d^c\mid\mathcal G_m)\le \frac{K+2}{\sqrt d}.
\]
For \(d\ge2\), on \(\mathcal A_d\) we have \(\|\xi_m\|_2^2>0\), and hence
\[
  \langle x, B_mx \rangle
  \ge \frac{1}{d(1 + d^{-1/4})}\,
  \mathbb{E}\!\left[
    \langle x,Y_{m+1} \rangle^2 \mathbf{1}_{\mathcal{A}_d}
    \,\middle|\,\mathcal G_m
  \right].
\]
Also
\[
  \E[\langle x,Y_{m+1}\rangle^2\mid\mathcal G_m]=1,\qquad
  \E[\langle x,Y_{m+1}\rangle^4\mid\mathcal G_m]
  =
  3+\sum_{i=1}^n(\E[(Y_{m+1})_i^4]-3)x_i^4
  \le K+3,
\]
and therefore Cauchy--Schwarz gives
\[
  \E[\langle x,Y_{m+1}\rangle^2\mathbf 1_{\mathcal A_d^c}\mid\mathcal G_m]
  \le \sqrt{(K+2)(K+3)}\,d^{-1/4}.
\]
Thus, setting
\[
  \eta_d =
  1-\frac{1-\sqrt{(K+2)(K+3)}\,d^{-1/4}}{1+d^{-1/4}},
\]
we have \(0\le \eta_d\le (1+\sqrt{(K+2)(K+3)})d^{-1/4}\). Combining the preceding estimates, we get
\[
  \langle x, B_mx \rangle \ge \frac{1 - \eta_d}{d},
\]
and hence, using that $B_m = P_mB_mP_m$ one then has that
\begin{equation}
  \label{eq:4.4}
  B_m\succeq \frac{1-\eta_d}{d}\,P_m.
\end{equation}
On \(G_m\), the projection \(P_m\) has rank \(d=n-m\). Applying
\eqref{eq:4.4} in \eqref{eq:4.3}, and using the definition of \(A_m\), yields
\[
  A_{m+1}
  \preceq
  \left(1 - \frac{1-\eta_{n-m}}{n-m}\right)A_m.
\]
Now we will iterate this bound to compare $\Id$ and $A_{n-1}$, we will begin by choosing
\[
  d_0
  =
  \max\!\left\{2,
  \left\lceil \bigl(2+2\sqrt{(K+2)(K+3)}\bigr)^4\right\rceil
  \right\},
\]
so that \(0\le\eta_d\le 1/2\) for all \(d\ge d_0\).
For \(d<d_0\), we use the trivial estimate \(A_{m+1}\preceq A_m\), and so, iterating from $A_0 = \Id$ we see
\[
  A_{n-1} \preceq \prod_{d=d_0}^{n}
  \left(1 - \frac{1-\eta_d}{d}\right) \Id.
\]
Since $\log(1-x) \le -x$,
\[
  \begin{aligned}
  \log\prod_{d=d_0}^n\!\left(1 - \frac{1-\eta_d}{d}\right)
  &\le
  -\sum_{d=d_0}^n\frac{1}{d}
  +
  (1+\sqrt{(K+2)(K+3)})\sum_{d=d_0}^\infty d^{-5/4} \\
  &\le
  -\log\!\left(\frac{n+1}{d_0}\right)
  +
  (1+\sqrt{(K+2)(K+3)})\sum_{d=d_0}^\infty d^{-5/4},
  \end{aligned}
\]
so after exponentiating we an promptly observe that there exists a constant $C\constsub{4.4} > 0$ such that
\[
  A_{n-1}\preceq \frac{C\constsub{4.4}}{n}\Id.
\]
The finitely many cases \(n<d_0\) are absorbed by increasing \(C\constsub{4.4}\).
Since \(G_{n-1}=\mathcal R_j\), the last display controls
\(\E[\Pi_j\mathbf 1_{\mathcal R_j}]\). The complement of \(\mathcal R_j\) is
exponentially unlikely by \cref{lem:4.2}, and \(0\preceq P_{n-1}\preceq\Id\);
therefore after abusing notation with absolute constants
\[
  \mathbb{E}[\Pi_j]
  = A_{n-1} + \mathbb{E}[P_{n-1}\mathbf{1}_{G_{n-1}^c}]
  \preceq \frac{C\constsub{4.4}}{n}\,\Id
  + e^{-c\constsub{4.2}n}\Id
  \preceq \frac{C\constsub{4.4}}{n}\,\Id.
\]

\medskip\noindent\textbf{Proving the implication of the upper bound.}
Now we will prove the consequently part of \cref{thm:4.4}. Since \(z_j\in H_j^\perp\) is a unit vector and \(\Pi_j\) is the orthogonal projection onto
\(H_j^\perp\) we have that
\[
  \mathbf 1_{\mathcal A_j}z_jz_j^\top\preceq \Pi_j.
\]
Thus, by the upper bound just proved,
\[
  \mathbb{E}\!\left[\mathbf{1}_{\mathcal{A}_j}\|M^{-\top}z_j\|_2^2\right]
  = \tr\!\left(M^{-1}M^{-\top}
    \mathbb{E}[\mathbf{1}_{\mathcal{A}_j}z_jz_j^\top]\right)
  \le \frac{C\constsub{4.4}}{n}\,\tr(M^{-1}M^{-\top})
  = \frac{C\constsub{4.4}\|M^{-1}\|_{\HS}^2}{n}.
\]
By Cauchy--Schwarz,
\[
  \mathbb{E}\!\left[\mathbf{1}_{\mathcal{A}_j}\|M^{-\top}z_j\|_2\right]
  \le \mathbb{E}\!\left[
    \mathbf{1}_{\mathcal{A}_j}\|M^{-\top}z_j\|_2^2
  \right]^{1/2}
  \le \sqrt{C\constsub{4.4}}\frac{\|M^{-1}\|_{\HS}}{\sqrt{n}}
  \le C\constsub{4.4}\frac{\|M^{-1}\|_{\HS}}{\sqrt{n}}.
\]
This completes the proof of the upper bound and the latter estimate of \cref{thm:4.4}.

\medskip\noindent\textbf{Lower bound.}
The lower bound uses the same strategy of revealing columns one at a time, but in the opposite direction. We
must show that the new revealed column cannot, on average, remove almost all of the
current orthogonal complement. The case \(n=1\) is immediate, since then \(\Pi_j=\Id\), so
assume \(n\ge2\). We keep the same enumeration and projections \(P_m\) as
above, but no longer multiply by the indicators \(\mathbf 1_{G_m}\). Conditionally on
\(\mathcal G_m\), let \(\xi_m=P_mY_{m+1}\). Then, for
\(0\le m\le n-2\),
\[
   P_{m+1}
   =
   P_m
   -
   \frac{\xi_m \xi_m^\top}{\|\xi_m\|_2^2}\mathbf 1_{\{\xi_m\ne0\}}.
\]
Using \eqref{eq:4.2},
\[
   \E[P_{m+1}\mid \mathcal G_m]
   =
   P_m-B_m.
\]

\medskip\noindent We first prove an upper bound for $B_m$. We claim that there is an integer
\(d_0\ge2\) and a constant \(C>0\), depending only on \(K\) and \(\kappa\),
and a constant \(\alpha>0\), depending only on \(K\), such that,
conditionally on \(\mathcal G_m\), if
\(d=\operatorname{rank}(P_m)\ge2\), then
\begin{equation}
  \label{eq:4.5}
   B_m\preceq
   \begin{cases}
      (1-\alpha)P_m, & 2\le d<d_0,\\[4pt]
      \dfrac{1+Cd^{-1/4}}{d}P_m, & d\ge d_0.
   \end{cases}
\end{equation}

\medskip\noindent To prove \cref{eq:4.5}, let us first work conditionally on \(\mathcal G_m\). Take
\(x\in S^{n-1}\cap\operatorname{Im}(P_m)\). Since
\(\langle x,\xi_m\rangle=\langle x,Y_{m+1}\rangle\), recall that \eqref{eq:4.2} gives
\[
   \langle x,B_mx\rangle
   =
   \E\left[
      \frac{\langle x,Y_{m+1}\rangle^2}{\|\xi_m\|_2^2}
      \mathbf 1_{\{\xi_m\ne0\}}
      \,\middle|\, \mathcal G_m
   \right].
\]
We will split into the case where $d \ge d_0$ (large ranks), and into the case where $2 \le d < d_0$ (small ranks). Let us begin first in the case where $d$ is large.

\medskip\noindent\textit{Large ranks.}
Consider first the case of large \(d\). By \cite[Corollary~1.4]{RV15}, there is a constant
\(c>0\), depending only on \(\kappa\), such that, for \(\xi_m\) one has that
\[
   \PP(\|\xi_m\|_2\le c\sqrt d\mid\mathcal G_m)
   \le e^{-cd}.
\]
After decreasing \(c\), assume \(c<1\). Decompose according to
\[
   \mathcal E_0=\{\|\xi_m\|_2^2\le c^2d\},\qquad
   \mathcal E_1=\{c^2d<\|\xi_m\|_2^2<d-d^{3/4}\},\qquad
   \mathcal E_2=\{\|\xi_m\|_2^2\ge d-d^{3/4}\}.
\]
For all sufficiently large \(d\), these events cover the whole probability
space. On \(\mathcal E_2\),
\[
   \frac{\langle x,Y_{m+1}\rangle^2}{\|\xi_m\|_2^2}
   \le
   \frac{\langle x,Y_{m+1}\rangle^2}{d-d^{3/4}},
\]
and since \(\E[\langle x,Y_{m+1}\rangle^2\mid\mathcal G_m]=1\),
\[
   \E\left[
      \frac{\langle x,Y_{m+1}\rangle^2}{\|\xi_m\|_2^2}
      \mathbf 1_{\mathcal E_2}
      \,\middle|\,\mathcal G_m
   \right]
   \le
   \frac{1}{d-d^{3/4}}
   \le
   \frac{1+2d^{-1/4}}{d}
\]
for all sufficiently large \(d\). On \(\mathcal E_1\),
\[
   \frac{\langle x,Y_{m+1}\rangle^2}{\|\xi_m\|_2^2}
   \le
   \frac{\langle x,Y_{m+1}\rangle^2}{c^2d}.
\]
By the same computation used in the upper bound, since
\(\|\xi_m\|_2^2=Y_{m+1}^\top P_mY_{m+1}\), we have
\[
   \E[\|\xi_m\|_2^2\mid\mathcal G_m]=d,
   \qquad
   \operatorname{Var}(\|\xi_m\|_2^2\mid\mathcal G_m)\le (K+2)d.
\]
Hence
\[
   \PP(\|\xi_m\|_2^2<d-d^{3/4}\mid\mathcal G_m)
   \le
   (K+2)d^{-1/2}.
\]
Moreover,
\[
   \E[\langle x,Y_{m+1}\rangle^4\mid\mathcal G_m]
   =
   3+\sum_{i=1}^n(\E[(Y_{m+1})_i^4]-3)x_i^4
   \le K+3.
\]
Let \(C>0\) be a constant depending only on \(K\) and \(\kappa\), large
enough for the following estimates. Therefore, by Cauchy--Schwarz,
\[
\begin{aligned}
   \E\left[
      \frac{\langle x,Y_{m+1}\rangle^2}{\|\xi_m\|_2^2}
      \mathbf 1_{\mathcal E_1}
      \,\middle|\,\mathcal G_m
   \right]
   &\le
   \frac{1}{c^2d}
   \E\left[
      \langle x,Y_{m+1}\rangle^2
      \mathbf 1_{\{\|\xi_m\|_2^2<d-d^{3/4}\}}
      \,\middle|\,\mathcal G_m
   \right]  \\
   &\le
   \frac{1}{c^2d}
   (\E[\langle x,Y_{m+1}\rangle^4\mid\mathcal G_m])^{1/2}
   \PP(\|\xi_m\|_2^2<d-d^{3/4}\mid\mathcal G_m)^{1/2}       \\
   &\le
   Cd^{-5/4}.
\end{aligned}
\]
Finally, on \(\mathcal E_0\), we use
\(\langle x,Y_{m+1}\rangle^2\le \|\xi_m\|_2^2\), which holds because
\(x\in\operatorname{Im}(P_m)\). Thus
\[
   \E\left[
      \frac{\langle x,Y_{m+1}\rangle^2}{\|\xi_m\|_2^2}
      \mathbf 1_{\mathcal E_0\cap\{\xi_m\ne0\}}
      \,\middle|\,\mathcal G_m
   \right]
   \le
   \PP(\mathcal E_0\mid\mathcal G_m)
   \le e^{-cd}
   \le Cd^{-5/4}
\]
for all sufficiently large \(d\). Combining the estimates over
\(\mathcal E_0,\mathcal E_1,\mathcal E_2\) gives
\[
   \langle x,B_mx\rangle
   \le
   \frac{1+Cd^{-1/4}}{d}.
\]
Since this holds for every unit vector \(x\in\operatorname{Im}(P_m)\), and
\(B_m=P_mB_mP_m\), we get
\[
   B_m\preceq \frac{1+Cd^{-1/4}}{d}P_m
\]
for all \(d\ge d_0\), after increasing \(d_0\) if necessary.

\medskip\noindent\textit{Small ranks.}
It remains to treat the finitely many ranks \(2\le d<d_0\). Fix
\(x\in S^{n-1}\cap\operatorname{Im}(P_m)\). Since \(\operatorname{rank}(P_m)\ge2\),
choose \(y\in S^{n-1}\cap\operatorname{Im}(P_m)\) with \(y\perp x\). Since
\(\langle y,\xi_m\rangle=\langle y,Y_{m+1}\rangle\), we have
\[
   \|\xi_m\|_2^2
   \ge
   \langle x,Y_{m+1}\rangle^2+\langle y,Y_{m+1}\rangle^2,
\]
and therefore
\[
   \frac{\langle x,Y_{m+1}\rangle^2}{\|\xi_m\|_2^2}
   \mathbf 1_{\{\xi_m\ne0\}}
   \le
   1-
   \frac{\langle y,Y_{m+1}\rangle^2}
   {\langle x,Y_{m+1}\rangle^2+\langle y,Y_{m+1}\rangle^2}
   \mathbf 1_{\{\langle x,Y_{m+1}\rangle^2
   +\langle y,Y_{m+1}\rangle^2>0\}}.
\]
Since
\(\E[\langle y,Y_{m+1}\rangle^2\mid\mathcal G_m]=1\) and
\(\E[\langle y,Y_{m+1}\rangle^4\mid\mathcal G_m]\le K+3\),
Paley--Zygmund gives
\[
   \PP(|\langle y,Y_{m+1}\rangle|\ge 1/\sqrt2\mid\mathcal G_m)
   \ge
   \frac{1}{4(K+3)}.
\]
Choose \(R^2=8(K+3)\). Since
\(\E[\langle x,Y_{m+1}\rangle^2\mid\mathcal G_m]=1\),
\[
   \PP(|\langle x,Y_{m+1}\rangle|>R\mid\mathcal G_m)\le \frac{1}{R^2}
   =
   \frac{1}{8(K+3)}.
\]
Hence
\[
   \PP(|\langle y,Y_{m+1}\rangle|\ge 1/\sqrt2,\,
   |\langle x,Y_{m+1}\rangle|\le R\mid\mathcal G_m)
   \ge
   \frac{1}{8(K+3)}.
\]
On this event,
\[
   \frac{\langle y,Y_{m+1}\rangle^2}
   {\langle x,Y_{m+1}\rangle^2+\langle y,Y_{m+1}\rangle^2}
   \ge
   \frac{1/2}{R^2+1/2}
   =
   \frac{1}{2R^2+1}.
\]
Thus
\[
   \E\left[
      \frac{\langle y,Y_{m+1}\rangle^2}
      {\langle x,Y_{m+1}\rangle^2+\langle y,Y_{m+1}\rangle^2}
      \mathbf 1_{\{\langle x,Y_{m+1}\rangle^2
      +\langle y,Y_{m+1}\rangle^2>0\}}
      \,\middle|\,\mathcal G_m
   \right]
   \ge
   \frac{1}{8(K+3)(2R^2+1)}.
\]
Set
\[
   \alpha_0=\frac{1}{8(K+3)(2R^2+1)}
   \qquad\text{and}\qquad
   \alpha=\min\{\alpha_0,1/2\}.
\]
The preceding displays give
\[
   \langle x,B_mx\rangle\le 1-\alpha.
\]
Since this holds for every unit \(x\in\operatorname{Im}(P_m)\), we get
\[
   B_m\preceq (1-\alpha)P_m
\]
for \(2\le d<d_0\). Together, the large-rank and small-rank cases prove
\cref{eq:4.5}.

\medskip\noindent\textit{Iterating the lower bound.}
For \(0\le m\le n-2\), the subspace onto
which \(P_m\) projects has dimension at least \(n-m\). In particular,
\(\operatorname{rank}(P_m)\ge n-m\ge2\). Increase \(d_0\), if necessary, so
that the function
\[
   f(d)=\frac{1+Cd^{-1/4}}{d}
\]
is decreasing for \(d\ge d_0\), and so that \(f(d)\le1/2\) for \(d\ge d_0\).
Define
\[
   \gamma_r
   =
   \begin{cases}
      \alpha, & 2\le r<d_0,\\[4pt]
      1-\dfrac{1+Cr^{-1/4}}{r}, & r\ge d_0.
   \end{cases}
\]
Since \(\operatorname{rank}(P_m)\ge n-m\), \cref{eq:4.5} gives
\[
   B_m\preceq (1-\gamma_{n-m})P_m.
\]
Indeed, if \(n-m\ge d_0\), this follows from the monotonicity of \(f\); if
\(n-m<d_0\), then either \(\operatorname{rank}(P_m)<d_0\), or else
\(B_m\preceq f(\operatorname{rank}(P_m))P_m\preceq \tfrac12P_m
\preceq (1-\alpha)P_m\). Consequently,
\begin{equation}
  \label{eq:4.6}
   \E[P_{m+1}\mid\mathcal G_m]
   =
   P_m-B_m
   \succeq
   \gamma_{n-m}P_m.
\end{equation}
Taking expectations in \eqref{eq:4.6} and iterating from \(m=0\) to
\(m=n-2\), with \(P_0=\Id\),
we obtain
\[
   \E[\Pi_j]
   =
   \E[P_{n-1}]
   \succeq
   \left(\prod_{r=2}^{n}\gamma_r\right)\Id.
\]

\medskip\noindent
It remains to estimate the product. The finitely many
factors with \(2\le r<d_0\) contribute a positive constant depending only on
\(K\) and \(\kappa\). For \(r\ge d_0\), put
\[
   x_r=\frac{1+Cr^{-1/4}}{r}.
\]
By our choice of \(d_0\), \(0<x_r\le1/2\), and therefore
\[
   \log(1-x_r)\ge -x_r-2x_r^2.
\]
Hence
\[
\begin{aligned}
   \sum_{r=d_0}^n \log(1-x_r)
   &\ge
   -\sum_{r=d_0}^n \frac1r
   -C\sum_{r=d_0}^\infty r^{-5/4}
   -2\sum_{r=d_0}^\infty x_r^2 \\
   &\ge
   -\log n - C,
\end{aligned}
\]
where \(C>0\) depends only on \(K\) and \(\kappa\). Thus
there is a constant \(c>0\), depending only on \(K\) and \(\kappa\), such that
\[
   \prod_{r=d_0}^n
   \left(1-\frac{1+Cr^{-1/4}}{r}\right)
   \ge
   \frac{c}{n}.
\]
Absorbing the finite product, and also the finitely many cases \(n<d_0\), we
get, after decreasing \(c\) if necessary,
\begin{equation}
  \label{eq:4.7}
   \prod_{r=2}^n\gamma_r\ge \frac{c}{n}.
\end{equation}
Therefore, by the preceding iteration and \eqref{eq:4.7},
\[
   \E[\Pi_j]\succeq \frac{c\constsub{4.4}}{n}\Id.
\]
This proves the lower bound and completes the proof of the theorem.
\end{proof}

\begin{remark}
\cref{thm:4.4} is the only place where the fourth moment assumption is required on the entries of $A$. It would be interesting to try to remove this assumption.
\end{remark}

\medskip\noindent Now that \cref{thm:4.4} is complete, it remains to control the numerator in \cref{lem:4.3}.
After \cref{thm:4.4}, the denominator \(\|M^{-\top}z_j\|_2\) is under control
on average, but this is useful only if the numerator \(|\langle z_j,Y_j\rangle|\)
is not too often exceptionally small. The role of \cref{lem:4.5} is to provide exactly this estimate,
through a direct application of the machinery from \cite{LTV21}.

\begin{lemma} \label{lem:4.5}
There exist constants \(C\constsub{4.5},c\constsub{4.5}>0\), depending only on \(K\) and \(\kappa\), and \(\mathcal F_j\)-measurable events \(\mathcal E_j\subseteq \mathcal R_j\) with
\[
  \PP(\mathcal E_j^c)\le e^{-c\constsub{4.5}n},
\]
such that on \(\mathcal E_j\),
\[
  \PP\!\bigl(|\langle z_j,Y_j\rangle|\le \varepsilon \,\big|\, \mathcal F_j\bigr)
  \le C\constsub{4.5}\bigl(\varepsilon + e^{-c\constsub{4.5}n}\bigr)
  \qquad\text{for all } \varepsilon\ge0.
\]
\end{lemma}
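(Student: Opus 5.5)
The plan is to bound the small ball probability of \(\langle z_j,Y_j\rangle\) via the arithmetic structure of the normal vector \(z_j\), following the approach of \cite{LTV21}. Conditionally on \(\mathcal F_j\), the vector \(z_j\) is fixed and \(Y_j\) is independent of it with independent coordinates satisfying \(\mathcal L(a_{ij},1)\le\kappa\). We therefore aim to produce an \(\mathcal F_j\)-measurable ``good event'' \(\mathcal E_j\subseteq\mathcal R_j\) on which \(z_j\) is unstructured. On such an event, a Littlewood--Offord type small ball inequality gives \(\PP(|\langle z_j,Y_j\rangle|\le\varepsilon\mid\mathcal F_j)\le C(\varepsilon+e^{-cn})\).

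\textbf{Step 1: incompressibility.} We first show that \(z_j\) is incompressible with overwhelming probability. The vector \(z_j\) satisfies \(W_j^\top z_j=0\), and \(W_j^\top\) is an \((n-1)\times n\) matrix whose entries satisfy the standing assumptions. The compressible estimate of \cref{lem:3.1}, in its rectangular form, which is also part of \cite{Livshyts21,LTV21}, gives
\[
\PP\bigl(\inf_{u\in\Comp}\|W_j^\top u\|_2\le c\sqrt n\bigr)\le e^{-cn}.
\]
Hence \(z_j\in\Incomp(\delta\constsub{3.1},\rho\constsub{3.1})\) outside an exceptional set of probability \(e^{-cn}\).

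\textbf{Step 2: large least common denominator.} This step is the main obstacle. We must show that, with probability \(1-e^{-cn}\), the normal vector \(z_j\) has a randomized least common denominator (the regularized LCD of \cite{LTV21}, adapted to non-identically distributed entries) at least \(e^{cn}\). We would not redo this. Instead we would cite the structure theorem for random normals from \cite{LTV21}, which shows precisely that the kernel vector of an \((n-1)\times n\) matrix with independent entries satisfying \(\mathcal L(a_{ij},1)\le\kappa\) is unstructured with exponentially high probability. The care needed is to check that this theorem is stated for \(W_j^\top\) under our hypotheses, namely independent entries, unit variance and a uniform L\'evy bound. Our hypotheses are stronger than theirs, so this should hold. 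We then define \(\mathcal E_j\) to be the intersection of \(\mathcal R_j\) with the events from Steps 1 and 2. This event is \(\mathcal F_j\)-measurable, and by \cref{lem:4.2} its complement has probability at most \(e^{-c\constsub{4.5}n}\). On \(\mathcal R_j\) the vector \(z_j\) is unique up to sign, so the event does not depend on the choice of \(z_j\).

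\textbf{Step 3: small ball estimate.} On \(\mathcal E_j\), we apply the small ball probability theorem of \cite{LTV21} (an Esseen-type inequality in terms of the LCD) to the fixed unit vector \(z_j\) and the independent vector \(Y_j\). This yields
\[
\PP(|\langle z_j,Y_j\rangle|\le\varepsilon\mid\mathcal F_j)\le C\Bigl(\varepsilon+\frac{1}{\mathrm{LCD}(z_j)}\Bigr)+e^{-cn},
\]
with \(C\) depending only on \(\kappa\). The extra \(e^{-cn}\) term comes from the incompressibility requirement of that theorem. Inserting the bound \(\mathrm{LCD}\ge e^{cn}\) from Step 2 completes the proof for all \(\varepsilon\ge0\). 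The finitely many small \(n\) are absorbed into \(C\constsub{4.5}\), as in \cref{subsec:2.1}.
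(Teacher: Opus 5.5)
Your proposal is correct and follows essentially the same route as the paper. The paper's proof invokes the random-normal machinery of \cite{LTV21} (Lemma~2.1, Proposition~5.3 and Lemma~2.5 there) to get an \(\mathcal F_j\)-measurable event \(\mathcal G_j\) with \(\PP(\mathcal G_j^c)\le e^{-cn}+2^{-n/2}\) on which the conditional small-ball bound holds, and then sets \(\mathcal E_j=\mathcal G_j\cap\mathcal R_j\) via \cref{lem:4.2}; this is your Steps~1--3 packaged as a single citation. The only point worth making explicit in your Step~2 is that the randomized LCD is taken with respect to the distribution of the column \(Y_j\) itself, which is exactly the form in which \cite{LTV21} proves the structure theorem for the normal to the span of the remaining columns.
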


\begin{proof}
Fix \(j\in[n]\), and let \(W_j\) be the matrix obtained from \(A\) by deleting the \(j\)-th column, so that \(\mathcal F_j=\sigma(W_j)\). Since \(Y_j\) is
independent of \(\mathcal F_j\), has independent entries, satisfies
\[
  \mathcal L\bigl((Y_j)_i,1\bigr)\le \kappa
  \qquad\text{and}\qquad
  \mathbb E (Y_j)_i^2=1
\]
for every \(i\), and moreover
\[
  \mathbb E\|Y_j\|_2^2=n,
\]
the hypotheses of \cite[Lemma~2.1, Proposition~5.3, and Lemma~2.5]{LTV21} are satisfied uniformly in \(j\). Hence, for all sufficiently large \(n\), there
exist constants \(C_1,c_1>0\), depending only on \(K\) and \(\kappa\), and an \(\mathcal F_j\)-measurable event \(\mathcal G_j\) such that
\[
  \PP(\mathcal G_j^c)\le e^{-c_1n}+2^{-n/2},
\]
and such that on \(\mathcal G_j\),
\[
  \PP\!\bigl(|\langle z_j,Y_j\rangle|\le \varepsilon \,\big|\, \mathcal F_j\bigr)
  \le C_1\varepsilon + C_1 e^{-c_1n}
  \qquad\text{for all }\varepsilon\ge 0.
\]
The finitely many smaller dimensions are absorbed into the constants, as in
\cref{subsec:2.1}.

\medskip\noindent We now intersect with \(\mathcal R_j\), since only on \(\mathcal R_j\) is \(H_j^\perp\) one-dimensional, so that \(z_j\) is the
normal vector appearing in \cref{lem:4.3}. Set
\[
  \mathcal E_j=\mathcal G_j\cap \mathcal R_j.
\]
By \cref{lem:4.2}, after decreasing the exponential rate if necessary, there exists \(c_2>0\), depending only on \(K\) and \(\kappa\), such that
\[
  \PP(\mathcal E_j^c)\le e^{-c_2n}.
\]
Since \(\mathcal E_j\subseteq \mathcal G_j\), the preceding conditional estimate remains valid on \(\mathcal E_j\). After enlarging \(C_1\), decreasing
\(c_2\), and relabeling constants, we obtain
\[
  \PP\!\bigl(|\langle z_j,Y_j\rangle|\le \varepsilon \,\big|\, \mathcal F_j\bigr)
  \le C\constsub{4.5}\bigl(\varepsilon+e^{-c\constsub{4.5}n}\bigr)
  \qquad\text{for all }\varepsilon\ge 0
\]
on \(\mathcal E_j\), and also
\[
  \PP(\mathcal E_j^c)\le e^{-c\constsub{4.5}n}.
\]
This proves the lemma.
\end{proof}

\medskip\noindent The numerator estimate and averaged denominator bound finally give \cref{cor:4.6}, which shows that $\dist(MY_j, MH_j)$ being small is rare on the event $\mc{R}_j$.
\begin{corollary} \label{cor:4.6}
There exist constants \(C\constsub{4.6},c\constsub{4.6}>0\), depending only on
\(K\) and \(\kappa\), such that for every \(j\in[n]\) and every
\(\varepsilon\ge0\),
\[
  \PP\!\Bigl(
    \dist(MY_j,MH_j)\le \frac{\varepsilon\sqrt n}{\|M^{-1}\|_{\HS}},\;
    \mathcal R_j
  \Bigr)
  \le C\constsub{4.6}\bigl(\varepsilon+e^{-c\constsub{4.6}n}\bigr).
\]
\end{corollary}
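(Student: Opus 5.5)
On \(\mathcal R_j\), \cref{lem:4.3} rewrites the event as
\[
  |\langle z_j,Y_j\rangle| \le \frac{\varepsilon\sqrt n}{\|M^{-1}\|_{\HS}}\,\|M^{-\top}z_j\|_2 .
\]
The plan is to condition on \(\mathcal F_j\), apply \cref{lem:4.5} with a random (\(\mathcal F_j\)-measurable) threshold, and then average the threshold using \cref{thm:4.4}.

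First split according to the event \(\mathcal E_j\subseteq\mathcal R_j\) from \cref{lem:4.5}:
\[
  \PP(\dist(MY_j,MH_j)\le t,\ \mathcal R_j)\le \PP(\mathcal E_j^c) + \E\bigl[\mathbf 1_{\mathcal E_j}\PP(\dist(MY_j,MH_j)\le t\mid\mathcal F_j)\bigr],
\]
where \(t=\varepsilon\sqrt n/\|M^{-1}\|_{\HS}\). The first term is at most \(e^{-c\constsub{4.5}n}\). For the second term, \(z_j\) is \(\mathcal F_j\)-measurable, so \(\tau_j:=t\|M^{-\top}z_j\|_2\) is an \(\mathcal F_j\)-measurable nonnegative number. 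Since \(Y_j\) is independent of \(\mathcal F_j\), \cref{lem:4.5} applied on \(\mathcal E_j\) with \(\varepsilon\) replaced by \(\tau_j\) gives
\[
  \PP(|\langle z_j,Y_j\rangle|\le\tau_j\mid\mathcal F_j)\le C\constsub{4.5}(\tau_j+e^{-c\constsub{4.5}n}).
\]
This use is legitimate because \cref{lem:4.5} holds for all \(\varepsilon\ge0\) simultaneously on \(\mathcal E_j\).

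Taking expectations and applying the consequence part of \cref{thm:4.4} with \(\mathcal A_j=\mathcal E_j\), which is \(\mathcal F_j\)-measurable, we get
\[
  \E[\mathbf 1_{\mathcal E_j}\tau_j] = \frac{\varepsilon\sqrt n}{\|M^{-1}\|_{\HS}}\E[\mathbf 1_{\mathcal E_j}\|M^{-\top}z_j\|_2] \le C\constsub{4.4}\,\varepsilon .
\]
The factor \(\sqrt n\) cancels exactly against the \(1/\sqrt n\) in \cref{thm:4.4}. Collecting terms yields the bound \(C\constsub{4.5}C\constsub{4.4}\varepsilon + (C\constsub{4.5}+1)e^{-c\constsub{4.5}n}\), so we may take \(C\constsub{4.6}=\max\{C\constsub{4.5}C\constsub{4.4},\,C\constsub{4.5}+1\}\) and \(c\constsub{4.6}=c\constsub{4.5}\).

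The only delicate point is measurability: the threshold \(\tau_j\) is random. This is harmless because it is \(\mathcal F_j\)-measurable and the conditional estimate of \cref{lem:4.5} is uniform in \(\varepsilon\). Everything else is a direct combination of the earlier results.
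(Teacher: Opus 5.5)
Your proposal is correct and follows essentially the same route as the paper. You split off $\mathcal E_j^c$, rewrite the distance via \cref{lem:4.3} on $\mathcal E_j\subseteq\mathcal R_j$, apply \cref{lem:4.5} conditionally on $\mathcal F_j$ with the $\mathcal F_j$-measurable threshold $\tau_j$, and average that threshold using the consequence of \cref{thm:4.4} with $\mathcal A_j=\mathcal E_j$; your explicit justification for the random threshold (independence of $Y_j$ from $\mathcal F_j$ and uniformity of \cref{lem:4.5} in $\varepsilon$) is a step the paper uses only implicitly.
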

\begin{proof}
Since \(\mathcal E_j\subseteq\mathcal R_j\),
\[
  \PP\!\Bigl(
    \dist(MY_j,MH_j)\le \frac{\varepsilon\sqrt n}{\|M^{-1}\|_{\HS}},\;
    \mathcal R_j
  \Bigr)
  \le
  \PP(\mathcal E_j^c)
  +
  \PP\!\Bigl(
    \dist(MY_j,MH_j)\le \frac{\varepsilon\sqrt n}{\|M^{-1}\|_{\HS}},\;
    \mathcal E_j
  \Bigr).
\]
On \(\mathcal E_j\),
\[
  \dist(MY_j,MH_j)
  =
  \frac{|\langle z_j,Y_j\rangle|}{\|M^{-\top}z_j\|_2}.
\]
Thus, by \cref{lem:4.5},
\begin{align*}
  &\PP\!\Bigl(
    \dist(MY_j,MH_j)\le \frac{\varepsilon\sqrt n}{\|M^{-1}\|_{\HS}},\;
    \mathcal E_j
  \Bigr) \\
  &\qquad=
  \E\!\left[
    \mathbf 1_{\mathcal E_j}
    \PP\!\Bigl(
      |\langle z_j,Y_j\rangle|
      \le
      \frac{\varepsilon\sqrt n}{\|M^{-1}\|_{\HS}}\,
      \|M^{-\top}z_j\|_2
      \,\Big|\, \mathcal F_j
    \Bigr)
  \right] \\
  &\qquad\le
  C\constsub{4.5}
  \frac{\varepsilon\sqrt n}{\|M^{-1}\|_{\HS}}\,
  \E\!\left[\mathbf 1_{\mathcal E_j}\|M^{-\top}z_j\|_2\right]
  +
  C\constsub{4.5}e^{-c\constsub{4.5}n} \\
  &\qquad\le
  C\constsub{4.4}C\constsub{4.5}\varepsilon
  +
  C\constsub{4.5}e^{-c\constsub{4.5}n}.
\end{align*}
Renaming then yields constants \(C\constsub{4.6}\) and \(c\constsub{4.6}\), proving the claim.
\end{proof}

\medskip\noindent We now pass from \cref{cor:4.6} to getting a full bound on the contribution added to the smallest singular value of $MA$ from the incompressible part of the sphere.

\begin{proposition} \label{prop:4.7}
There exist constants \(C\constsub{4.7},c\constsub{4.7}>0\), depending only on
\(K\) and \(\kappa\), such that for every $\varepsilon \ge 0$,
\[
  \PP\!\Bigl(
    \exists\, u \in \Incomp(\delta\constsub{3.1},\rho\constsub{3.1})
    : \|MAu\|_2 \le \frac{\varepsilon}{\|M^{-1}\|_{\HS}}
  \Bigr)
  \le C\constsub{4.7}\bigl(\varepsilon + e^{-c\constsub{4.7}n}\bigr).
\]
\end{proposition}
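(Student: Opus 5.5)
The plan is the standard Rudelson--Vershynin averaging argument ("invertibility via distance"), combining \cref{lem:4.1}, \cref{lem:4.2} and \cref{cor:4.6}. Write $\delta=\delta\constsub{3.1}$, $\rho=\rho\constsub{3.1}$, and let $\mathcal B$ denote the event in the statement. Suppose $\mathcal B$ occurs, witnessed by some $u\in\Incomp(\delta,\rho)$. By \cref{lem:4.1} there is a set $J(u)$ with $|J(u)|\ge \tfrac12\rho^2\delta n$ and $|u_j|\ge \rho/\sqrt{2n}$ for $j\in J(u)$. As in \cref{subsec:2.2}, for each such $j$,
\[
  \dist(MY_j,MH_j)\le \frac{\|MAu\|_2}{|u_j|}\le \frac{\sqrt{2n}}{\rho}\cdot\frac{\varepsilon}{\|M^{-1}\|_{\HS}}.
\]
Thus on $\mathcal B$ at least $\tfrac12\rho^2\delta n$ indices $j$ satisfy $\dist(MY_j,MH_j)\le \varepsilon'\sqrt n/\|M^{-1}\|_{\HS}$, where $\varepsilon'=\sqrt2\,\varepsilon/\rho$.

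Next I would count these indices in expectation. Let $N$ be the number of $j\in[n]$ with $\dist(MY_j,MH_j)\le \varepsilon'\sqrt n/\|M^{-1}\|_{\HS}$. Then $\mathcal B\subseteq\{N\ge \tfrac12\rho^2\delta n\}$, and Markov's inequality gives
\[
  \PP(\mathcal B)\le \frac{2}{\rho^2\delta n}\,\E N=\frac{2}{\rho^2\delta n}\sum_{j=1}^n \PP\Bigl(\dist(MY_j,MH_j)\le \tfrac{\varepsilon'\sqrt n}{\|M^{-1}\|_{\HS}}\Bigr).
\]
Each summand splits according to $\mathcal R_j$. The part on $\mathcal R_j$ is at most $C\constsub{4.6}(\varepsilon'+e^{-c\constsub{4.6}n})$ by \cref{cor:4.6}. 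The part on $\mathcal R_j^c$ is at most $e^{-c\constsub{4.2}n}$ by \cref{lem:4.2}. Summing over $j$ and dividing by $n$ gives
\[
  \PP(\mathcal B)\le \frac{2}{\rho^2\delta}\bigl(C\constsub{4.6}\sqrt2\,\varepsilon/\rho + C\constsub{4.6}e^{-c\constsub{4.6}n}+e^{-c\constsub{4.2}n}\bigr),
\]
which is the claim after renaming constants. These depend only on $K,\kappa$, since $\delta,\rho$ depend only on $\kappa$.

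The only delicate point is the bookkeeping in the averaging step. The witness $u$, and hence $J(u)$, is random, so the Markov bound must be applied to the deterministic-index count $N$ rather than to $J(u)$ directly. The scaling also has to match: \cref{cor:4.6} is stated at threshold $\varepsilon\sqrt n/\|M^{-1}\|_{\HS}$, and the $\sqrt n$ is exactly supplied by the lower bound $|u_j|\gtrsim 1/\sqrt n$. No further probabilistic input is needed. The small-$n$ cases are absorbed into the constants as described in \cref{subsec:2.1}.
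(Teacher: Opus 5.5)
Your proposal is correct and follows the paper's proof essentially step for step. The paper also uses \cref{lem:4.1} to get at least $\tfrac12\rho^2\delta n$ indices with $\dist(MY_j,MH_j)\le(\sqrt2/\rho)\,\varepsilon\sqrt n/\|M^{-1}\|_{\HS}$, applies Markov to a deterministic-index count $N_\varepsilon$ (with the $\mathcal R_j$ versus $\mathcal R_j^c$ split built into the indicators), and bounds $\E[N_\varepsilon]$ using \cref{cor:4.6} and \cref{lem:4.2}.
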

\begin{proof}
Define
\[
\begin{aligned}
  \mathcal{A}_\varepsilon
  &=
  \Bigl\{\exists\,u\in\Incomp(\delta\constsub{3.1},\rho\constsub{3.1}):
  \|MAu\|_2\le \varepsilon/\|M^{-1}\|_{\HS}\Bigr\},\\
  N_\varepsilon
  &=
  \sum_{j=1}^n\Bigl(
    \mathbf{1}_{\{\dist(MY_j,MH_j)\le
      (\sqrt2/\rho\constsub{3.1})\varepsilon\sqrt{n}/\|M^{-1}\|_{\HS},\,
      \mathcal{R}_j\}}
    +\mathbf{1}_{\mathcal{R}_j^c}
  \Bigr).
\end{aligned}
\]
If \(u\) witnesses \(\mathcal A_\varepsilon\), \cref{lem:4.1} gives \(J(u)\subset[n]\) with
\[
  |J(u)| \ge \frac12 \rho\constsub{3.1}^2 \delta\constsub{3.1} n
\]
and
\[
  |u_j| \ge \frac{\rho\constsub{3.1}}{\sqrt{2n}},
  \qquad
  j \in J(u).
\]
For \(j\in J(u)\), put \(w=\sum_{k\ne j}u_kMY_k\in MH_j\). Then
\[
  \dist(u_jMY_j, MH_j)
  = \inf_{h\in MH_j}\|MAu - w - h\|_2
  = \dist(MAu, MH_j)
  \le \|MAu\|_2 \le \frac{\varepsilon}{\|M^{-1}\|_{\HS}}.
\]
Dividing by $\abs{u_j}$, we may see that
\[
  \dist(MY_j, MH_j)
  = \frac{\dist(u_jMY_j,MH_j)}{|u_j|}
  \le
  \frac{\sqrt2}{\rho\constsub{3.1}}
  \frac{\varepsilon\sqrt{n}}{\|M^{-1}\|_{\HS}}.
\]
Hence, on \(\mathcal A_\varepsilon\),
\[
  N_\varepsilon \ge |J(u)| \ge \frac12 \rho\constsub{3.1}^2 \delta\constsub{3.1} n.
\]
Therefore
\[
  \PP(\mathcal{A}_\varepsilon)
  \le
  \frac{2}{\rho\constsub{3.1}^2 \delta\constsub{3.1} n}\,\mathbb{E}[N_\varepsilon].
\]
By \cref{cor:4.6} and \cref{lem:4.2},
\[
  \mathbb{E}[N_\varepsilon]
  \le
  nC\constsub{4.6}
  \left(
    \frac{\sqrt2}{\rho\constsub{3.1}}\varepsilon
    +
    e^{-c\constsub{4.6}n}
  \right)
  +
  ne^{-c\constsub{4.2}n}.
 \]
Since \(\delta\constsub{3.1}\) and \(\rho\constsub{3.1}\) are fixed,
\[
  \PP(\mathcal{A}_\varepsilon)
  \le
  C\constsub{4.7}\bigl(\varepsilon + e^{-c\constsub{4.7}n}\bigr). \qedhere
\]
\end{proof}

\medskip\noindent The theorem now follows by combining the compressible estimate of \cref{prop:3.2} with the incompressible estimate of \cref{prop:4.7}.
\begin{proof}[Proof of \cref{thm:1.1}]
For \(0\le\varepsilon\le c\constsub{3.1}\),
\cref{prop:3.2} gives, since \(n\ge 1\),
\[
  \PP\!\Bigl(
    \inf_{u\in\Comp(\delta\constsub{3.1},\rho\constsub{3.1})}
    \|MAu\|_2 \le \frac{\varepsilon}{\|M^{-1}\|_{\HS}}
  \Bigr)
  \le e^{-c\constsub{3.1}n}.
\]
Decompose
\[
  S^{n-1}
  =\Comp(\delta\constsub{3.1},\rho\constsub{3.1})
   \cup \Incomp(\delta\constsub{3.1},\rho\constsub{3.1}).
\]
For the incompressible part, \cref{prop:4.7} gives
\[
  \PP\!\Bigl(\exists\,u\in\Incomp(\delta\constsub{3.1},\rho\constsub{3.1}):
  \|MAu\|_2\le \varepsilon/\|M^{-1}\|_{\HS}\Bigr)
  \le C\constsub{4.7}\bigl(\varepsilon + e^{-c\constsub{4.7}n}\bigr).
\]
A union bound over the two cases gives constants \(C,c>0\), depending
only on \(K\) and \(\kappa\), such that
\[
  \PP\!\left(s_{\min}(MA)\le \frac{\varepsilon}{\|M^{-1}\|_{\HS}}\right)
  \le C\varepsilon + Ce^{-cn}
\]
for \(0\le\varepsilon\le c\constsub{3.1}\). Since \(n\) is sufficiently large,
after decreasing \(c\) we have \(Ce^{-cn}\le e^{-c\constsub{1.1}n}\).
Renaming constants gives
\[
  \PP\!\left(s_{\min}(MA)\le \frac{\varepsilon}{\|M^{-1}\|_{\HS}}\right)
  \le C\constsub{1.1}\varepsilon + e^{-c\constsub{1.1}n}
\]
for \(0\le\varepsilon\le c\constsub{3.1}\). The finitely many smaller dimensions
are handled as in \cref{subsec:2.1}. For \(\varepsilon>c\constsub{3.1}\), the
bound follows after enlarging \(C\constsub{1.1}\), since
\(\PP(\cdot)\le 1\le c\constsub{3.1}^{-1}\varepsilon\).
\end{proof}

\section{Upper bound for Gaussian entries} \label{sec:5}

\medskip\noindent In this section, we show that when $A$ has i.i.d. Gaussian entries, \cref{thm:1.1} is sharp in the sense that $1/\|M^{-1}\|_{\HS}$ is the correct scale. That is,
\[
\E[s_{\min}(MA)] \asymp \frac{1}{\norm{M^{-1}}_{\HS}}.
\]
Unlike the lower bound in
\cref{thm:1.1}, the high-probability upper bound depends on the effective
number of directions in which \(M^{-1}\) is large. This effective dimension is
measured by the stable rank \(\sr(M^{-1})\). We also provide an example at the end of \cref{sec:5} showing that the dependence on the stable rank is not an artifact of our proofs.

\medskip\noindent Assume here that \(A\) has i.i.d.\ \(\mathcal N(0,1)\) entries. We use a classical Gaussian smallest singular value estimate, going back to Edelman~\cite{Edelman88,Edelman91}: there exist absolute constants \(C,c>0\) such that
\begin{equation}
\label{eq:5.1}
\PP\!\left(s_{\min}(A)>\frac{t}{\sqrt n}\right)
\;\le\;
  Ce^{-ct^2},
  \qquad t\ge 1.
\end{equation}
The main result for this section is \cref{thm:5.1}, which gives a bound analogous to \cref{eq:5.1}.
\begin{theorem} \label{thm:5.1}
Let $A$ be an $n\times n$ random matrix with i.i.d.\
$\mathcal{N}(0,1)$ entries, and let $M$ be any fixed invertible $n\times n$
matrix. Write
\[
  \sr(M^{-1})
  \;=\;
  \frac{\|M^{-1}\|_{\HS}^2}{\|M^{-1}\|_{\mathrm{op}}^2}.
\]
There exist absolute constants \(C\constsub{5.1},c\constsub{5.1}>0\) such that
for all \(n\ge 1\) and every \(t\ge 1\),
\begin{equation}
  \label{eq:5.2}
  \PP\!\left(
    s_{\min}(MA)\le \frac{C\constsub{5.1}t}{\|M^{-1}\|_{\HS}}
  \right)
  \;\ge\;
  1
  -
  C\constsub{5.1}e^{-c\constsub{5.1}t^2}
  -
  C\constsub{5.1}e^{-c\constsub{5.1}\sr(M^{-1})}.
\end{equation}
\end{theorem}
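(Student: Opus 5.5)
We want an upper bound on $s_{\min}(MA)$ at scale $t/\|M^{-1}\|_{\HS}$, holding with probability $1-Ce^{-ct^2}-Ce^{-c\sr(M^{-1})}$. The plan is to use the dual characterization
\[
  s_{\min}(MA)=\frac{1}{\|(MA)^{-1}\|_{\op}}=\frac{1}{\|A^{-1}M^{-1}\|_{\op}},
\]
so it suffices to show $\|A^{-1}M^{-1}\|_{\op}\gtrsim \|M^{-1}\|_{\HS}/t$ with the stated probability. Take the SVD $M^{-1}=U\Sigma V^\top$. By rotation invariance of the Gaussian matrix, $A^{-1}U$ has the same law as $A^{-1}$ (since $U^\top A$ has the law of $A$). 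Dropping $V^\top$, which does not change the operator norm, it therefore suffices to lower bound $\|A^{-1}\Sigma\|_{\op}$ with $\Sigma=\diag(\sigma_1,\dots,\sigma_n)$, where $\sum\sigma_i^2=\|M^{-1}\|_{\HS}^2$.

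Let $v$ be a unit vector with $\|A^{-\top}v\|_2=\|A^{-1}\|_{\op}=1/s_{\min}(A)$; this is the left singular direction of $A^{-1}$, i.e.\ the bottom singular vector of $A^{\top}$. Then
\[
  \|A^{-1}\Sigma\|_{\op}\ge \|\Sigma A^{-\top}v\|_2=\frac{1}{s_{\min}(A)}\,\|\Sigma w\|_2,
\]
where $w=A^{-\top}v/\|A^{-\top}v\|_2$ is the right singular vector of $A^{-1}$ associated to its top singular value, equivalently the bottom left singular vector of $A$. For Gaussian $A$, left-multiplying by a Haar orthogonal $O$ leaves the law of $A$ invariant, sends $w\mapsto Ow$, and leaves $s_{\min}(A)$ fixed. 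Hence $w$ is uniform on $S^{n-1}$ and independent of $s_{\min}(A)$. This step needs care: one should choose $w$ measurably with a random sign, and the singular values are almost surely distinct, so this is fine.

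Two estimates then finish the argument.
\begin{enumerate}
  \item By \cref{eq:5.1}, $1/s_{\min}(A)\ge \sqrt n/t$ with probability at least $1-Ce^{-ct^2}$.
  \item Writing $w=g/\|g\|_2$ with $g$ standard Gaussian, we have $\|g\|_2\le 2\sqrt n$ with probability $1-e^{-cn}$. Moreover $\|\Sigma g\|_2^2$ is a Gaussian quadratic form with mean $\|\Sigma\|_{\HS}^2$. By Hanson--Wright (or Laurent--Massart for weighted chi-squares), $\|\Sigma g\|_2^2\ge \tfrac12\|\Sigma\|_{\HS}^2$ fails with probability at most $\exp(-c\|\Sigma\|_{\HS}^4/\|\Sigma^2\|_{\HS}^2)\le \exp(-c\,\sr(M^{-1}))$, using $\|\Sigma^2\|_{\HS}^2\le \|\Sigma\|_{\op}^2\|\Sigma\|_{\HS}^2$.
\end{enumerate}
Combining these, $\|\Sigma w\|_2\ge \|M^{-1}\|_{\HS}/(2\sqrt{2n})$. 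Since $e^{-cn}\le e^{-c\,\sr(M^{-1})}$ because $\sr\le n$, this term is absorbed. Multiplying the two bounds gives $\|A^{-1}\Sigma\|_{\op}\ge \|M^{-1}\|_{\HS}/(2\sqrt2\,t)$, so $s_{\min}(MA)\le 2\sqrt2\,t/\|M^{-1}\|_{\HS}$. A union bound yields \eqref{eq:5.2}.

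The main obstacle is the independence and uniformity claim for the singular vector $w$ relative to $s_{\min}(A)$. I would handle it through the polar/SVD decomposition of a Gaussian matrix: $A=O_1 D O_2$ with $O_1$ Haar-distributed and independent of $D$. This is standard, and it gives exactly that the bottom left singular vector is uniform and independent of the singular values. If one prefers to avoid this, the fallback is to condition on $A$ up to left rotation and average over the Haar $O$ directly, which gives the same conclusion.
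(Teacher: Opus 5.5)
Your proposal is correct and is essentially the paper's argument. The paper bounds $s_{\min}(MA)\le s_{\min}(A)/\|M^{-\top}u\|_2$ by testing $(MA)^\top$ on $M^{-\top}u/\|M^{-\top}u\|_2$, where $u$ is the bottom left singular vector of $A$; this is the primal form of your dual lower bound on $\|A^{-1}M^{-1}\|_{\op}$. It then combines \eqref{eq:5.1} with a Hanson--Wright lower-tail bound for $u^\top M^{-1}M^{-\top}u$ via a union bound, exactly as you do.

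Your SVD reduction to a diagonal $\Sigma$ is harmless but unnecessary. Also, only the uniformity of $u$ is actually used for the union bound; its independence from $s_{\min}(A)$ is not needed, both in your argument and in the paper's.
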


\medskip\noindent The proof of \cref{thm:5.1} combines \eqref{eq:5.1} with a lower-tail estimate
for \(\|M^{-\top}u_n(A)\|_2\), where \(u_n(A)\) is the random left singular
vector corresponding to \(s_{\min}(A)\). The next lemma provides exactly this
second result, which is an immediate corollary of the Hanson--Wright inequality~\cite[Theorem~6.2.1]{Vershynin18}.

\begin{lemma} \label{lem:5.2}
Let $N$ be a fixed $n\times n$ positive semi-definite matrix with $N\ne 0$, and
let $v\sim\mathrm{Unif}(S^{n-1})$ be independent of $N$. Then there exist
absolute constants \(C\constsub{5.2},c\constsub{5.2}>0\) such that
\begin{equation}
  \label{eq:5.3}
  \PP\!\left(v^\top N v\;\le\;\frac{\tr(N)}{4n}\right)
  \;\le\;
  C\constsub{5.2}
  \exp\!\left(
    -c\constsub{5.2}\frac{\tr(N)}{\|N\|_{\mathrm{op}}}
  \right).
\end{equation}
\end{lemma}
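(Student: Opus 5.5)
The plan is to realize \(v\) as a normalized Gaussian vector. We write \(v = g/\|g\|_2\) with \(g\sim\mathcal N(0,\Id_n)\), so that
\[
v^\top N v = \frac{g^\top N g}{\|g\|_2^2}.
\]
The event \(v^\top Nv\le \tr(N)/(4n)\) is then contained in the union of two events:
\[
\Bigl\{g^\top N g\le \tfrac12\tr(N)\Bigr\}\cup\Bigl\{\|g\|_2^2\ge 2n\Bigr\}.
\]
Indeed, on the complement of both we have \(g^\top Ng/\|g\|_2^2 > \tr(N)/(4n)\). It therefore suffices to bound the two probabilities separately by \(C\exp(-c\,\tr(N)/\|N\|_{\op})\).

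For the first event, apply Hanson--Wright \cite[Theorem~6.2.1]{Vershynin18} to the quadratic form \(g^\top N g\), whose mean is \(\tr(N)\). With deviation \(t=\tr(N)/2\) it gives
\[
\PP\bigl(|g^\top Ng-\tr N|\ge t\bigr)\le 2\exp\Bigl(-c\min\Bigl(\frac{t^2}{\|N\|_{\HS}^2},\frac{t}{\|N\|_{\op}}\Bigr)\Bigr).
\]
Since \(N\succeq0\), we have \(\|N\|_{\HS}^2=\sum\lambda_i^2\le \|N\|_{\op}\tr(N)\). Hence \(t^2/\|N\|_{\HS}^2\ge \tr(N)/(4\|N\|_{\op})\), and both terms in the minimum are at least a constant times \(\tr(N)/\|N\|_{\op}\).

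For the second event, the Gaussian norm concentration (or Hanson--Wright with \(N=\Id\)) gives \(\PP(\|g\|_2^2\ge 2n)\le e^{-cn}\). Because \(\tr(N)/\|N\|_{\op}\le n\) always, this is at most \(\exp(-c\,\tr(N)/\|N\|_{\op})\).

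Combining the two bounds by a union bound and adjusting constants yields \eqref{eq:5.3}. The only subtle point is checking that the \(\HS\) regime of Hanson--Wright is dominated by the stable-rank quantity, which the positive semi-definiteness handles. Independence of \(v\) and \(N\) allows us to condition on \(N\) and treat it as fixed.
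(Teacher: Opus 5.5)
Your proposal is correct and follows the paper's proof essentially step for step. Like the paper, you write $v = g/\|g\|_2$, apply Hanson--Wright at deviation $\tr(N)/2$ using the positive semi-definite bound $\|N\|_{\HS}^2 \le \|N\|_{\op}\tr(N)$, and absorb the chi-square tail for $\|g\|_2^2$ via $\tr(N)/\|N\|_{\op} \le n$.
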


\begin{proof}
Let \(g\sim\mathcal N(0,\Id)\). We may write \(v\deq g/\|g\|_2\), and
\[
  v^\top Nv \;=\; \frac{g^\top Ng}{\|g\|_2^2}.
\]
Set \(\lambda=\tfrac12\tr(N)\). Hanson--Wright~\cite[Theorem~6.2.1]{Vershynin18} gives
\[
  \PP\!\left(g^\top Ng \le \tr(N) - \lambda\right)
  \;\le\;
  \exp\!\left(
    -c
    \min\!\left\{
      \frac{\lambda^2}{\|N\|_{\HS}^2},\;\frac{\lambda}{\|N\|_{\mathrm{op}}}
    \right\}
  \right).
\]
Using \(\|N\|_{\op}\tr(N)\ge \|N\|_{\HS}^2\) we promptly see
\[
  \frac{\lambda^2}{\|N\|_{\HS}^2}
  \;\ge\;
  \frac{\tr(N)}{4\|N\|_{\mathrm{op}}},
  \qquad
  \frac{\lambda}{\|N\|_{\op}}
  \ge
  \frac{\tr(N)}{4\|N\|_{\op}},
\]
and thus we may obtain that
\[
  \PP\!\left(g^\top Ng \le \tfrac{1}{2}\,\tr(N)\right)
  \;\le\;
  \exp\!\left(
    -c\frac{\tr(N)}{\|N\|_{\mathrm{op}}}
  \right).
\]
Also, the standard tail bound for the norm of a Gaussian random vector gives
\[
  \PP\!\left(\|g\|_2^2 > 2n\right) \;\le\; C e^{-c n}.
\]
On the complement of these two events,
\[
  v^\top Nv \;=\; \frac{g^\top Ng}{\|g\|_2^2}
  \;\ge\;
  \frac{\tr(N)}{4n}.
\]
Since \(\tr(N)\le n\|N\|_{\op}\), we see
\[
  \PP\!\left(v^\top Nv \le \frac{\tr(N)}{4n}\right)
  \;\le\;
  C\constsub{5.2}
  \exp\!\left(
    -c\constsub{5.2}\frac{\tr(N)}{\|N\|_{\op}}
  \right),
\]
for some appropriate constants \(C\constsub{5.2}\) and \(c\constsub{5.2}\).
\end{proof}

\begin{proof}[Proof of \cref{thm:5.1}]
Set \(N=M^{-1}M^{-\top}\). If \(u=u_n(A)\) is a unit left singular vector for \(s_{\min}(A)\), let
\[
  y=\frac{M^{-\top}u}{\|M^{-\top}u\|_2}.
\]
Then
\[
  s_{\min}(MA)
  \le
  \|(MA)^\top y\|_2
  \;=\;
  \frac{s_{\min}(A)}{\|M^{-\top}u\|_2}.
\]

\medskip\noindent By Gaussian rotational invariance we have that $u_n(A)$ is uniform on the sphere. Furthermore, note that $u_n(A)$ is independent of $s_{\min}(A)$. Let us define two events
\[
  \mathcal{A}_t
  \;=\;
  \left\{s_{\min}(A) \le \frac{t}{\sqrt{n}}\right\},
  \qquad
  \mathcal{D}
  \;=\;
  \left\{
    \|M^{-\top}u_n(A)\|_2 \ge \frac{\|M^{-1}\|_{\HS}}{2\sqrt{n}}
  \right\}.
\]
On \(\mathcal A_t\cap\mathcal D\) we have that
\[
  s_{\min}(MA)
  \;\le\;
  \frac{t/\sqrt{n}}{\|M^{-1}\|_{\HS}/(2\sqrt{n})}
  \;=\;
  \frac{2t}{\|M^{-1}\|_{\HS}}.
\]
Moreover, since \(\tr(N)=\|M^{-1}\|_{\HS}^2\) and \(\|N\|_{\op}=\|M^{-1}\|_{\op}^2\), \cref{lem:5.2} gives
\[
  \PP(\mathcal D^c)
  \;\le\;
  C\constsub{5.2}e^{-c\constsub{5.2}\sr(M^{-1})}.
\]
Together with \eqref{eq:5.1},
\[
  \PP\!\left(s_{\min}(MA) > \frac{2t}{\|M^{-1}\|_{\HS}}\right)
  \;\le\;
  \PP(\mathcal{A}_t^c) + \PP(\mathcal{D}^c)
  \;\le\;
  Ce^{-ct^2}
  +
  C\constsub{5.2}e^{-c\constsub{5.2}\sr(M^{-1})},
\]
which proves \eqref{eq:5.2} after taking \(C\constsub{5.1}\ge
\max\{2,C,C\constsub{5.2}\}\) and
\(c\constsub{5.1}\le\min\{c,c\constsub{5.2}\}\).
\end{proof}

\medskip\noindent The term
\(C\constsub{5.1}e^{-c\constsub{5.1}\sr(M^{-1})}\) in \cref{thm:5.1}
prevents one from directly integrating the tail estimate in \(t\). Fortunately however, by
combining that theorem with a cruder estimate, we can prove that $\E[s_{\min}(MA)] \asymp \norm{M^{-1}}_{\HS}^{-1}$. The next proposition establishes the upper bound, while the lower bound is immediate from \cref{thm:1.1}.

\begin{proposition}
\label{prop:5.expectation-upper}
Let \(A\) be an \(n\times n\) random matrix with i.i.d.\ \(\mathcal N(0,1)\) entries, and let \(M\) be a fixed invertible \(n\times n\) matrix. Then
\[
  \E[s_{\min}(MA)]
  \le \frac{C\constsub{5.3}}{\|M^{-1}\|_{\HS}},
\]
where \(C\constsub{5.3}>0\) is an absolute constant.
\end{proposition}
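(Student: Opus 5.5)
The plan is to split according to the size of the stable rank $s=\sr(M^{-1})$. When $s$ is large, integrate the tail estimate of \cref{thm:5.1}. When $s$ is small, use a direct, cruder argument based on a single direction where $M^{-1}$ is large.

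\textbf{Crude bound, valid for every $M$.} Let $v$ be a unit top singular vector of $M^{-\top}$, so $\|M^{-\top}v\|_2=\|M^{-1}\|_{\op}$. Put $y=M^{-\top}v/\|M^{-1}\|_{\op}$. Then
\[
s_{\min}(MA)\le \|(MA)^\top y\|_2=\frac{\|A^\top v\|_2}{\|M^{-1}\|_{\op}},
\]
and since $A^\top v\sim\mathcal N(0,\Id_n)$ we get $\E[s_{\min}(MA)]\le \sqrt n/\|M^{-1}\|_{\op}$. This is useless in general. To handle small stable rank I refine it using a $k$-dimensional top singular subspace. Take $k=\lceil s\rceil$-type many top directions, or more simply minimize over $y$ in the span $V$ of the top singular directions where $\|M^{-\top}\cdot\|$ is comparable. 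The cleaner route is a bound that does not need refinement at all: $\|M^{-1}\|_{\HS}^2=s\|M^{-1}\|_{\op}^2$, so
\[
\frac{\sqrt n}{\|M^{-1}\|_{\op}}=\frac{\sqrt{ns}}{\|M^{-1}\|_{\HS}}.
\]
This is too weak when $n$ is large, so refinement is necessary. Concretely, let $P$ be the projection onto the top singular subspace $V$ of $M^{-\top}$. Choose $\dim V=k$ with all singular values on $V$ at least $\sigma_k$, where $\sigma_k$ is chosen so that $k\sigma_k^2\gtrsim\|M^{-1}\|_{\HS}^2$ up to losses. Then
\[
s_{\min}(MA)\le \frac{\inf_{u\in V\cap S^{n-1}}\|A^\top u\|_2}{\sigma_k}=\frac{s_{\min}(A^\top|_V)}{\sigma_k}.
\]
The matrix $A^\top|_V$ is a Gaussian $n\times k$ matrix, and its smallest singular value is about $\sqrt n-\sqrt k$. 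That is still of order $\sqrt n$ when $k\ll n$, so this fails. The better approach therefore goes through \cref{thm:5.1} and only needs an $L^1$ handle on the bad event.

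\textbf{Main route.} I would write
\[
s_{\min}(MA)\le \frac{s_{\min}(A)}{\|M^{-\top}u_n(A)\|_2},
\]
with $u=u_n(A)$ uniform on the sphere and independent of $s_{\min}(A)$, as in the proof of \cref{thm:5.1}. Independence gives
\[
\E[s_{\min}(MA)]\le \E[s_{\min}(A)]\;\E\bigl[\|M^{-\top}u\|_2^{-1}\bigr].
\]
By \eqref{eq:5.1}, $\E[s_{\min}(A)]\le C/\sqrt n$. It then suffices to show $\E[(u^\top Nu)^{-1/2}]\le C\sqrt n/\|M^{-1}\|_{\HS}$ with $N=M^{-1}M^{-\top}$. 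This is the key step and the main obstacle, because for small stable rank the quantity $u^\top Nu$ can be tiny. Two observations control this.
\begin{itemize}
\item Bounding $u^\top Nu\ge\|M^{-1}\|_{\op}^2\langle u,v\rangle^2$ with $v$ a top eigenvector, and using that $\sqrt n\,\langle u,v\rangle$ has density bounded near $0$, shows that $\E|\langle u,v\rangle|^{-1}$ diverges logarithmically. So one top direction alone is not enough.
\item Using two or more top directions fixes this. If $v_1,\dots,v_k$ are the top eigenvectors with eigenvalues $\lambda_1\ge\dots\ge\lambda_k$, then $u^\top Nu\ge\lambda_k\sum_{i\le k}\langle u,v_i\rangle^2$. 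Since $n\sum_{i\le k}\langle u,v_i\rangle^2$ is approximately $\chi^2_k$, for $k\ge2$ one has $\E[(\sum_{i\le k}\langle u,v_i\rangle^2)^{-1/2}]\le C\sqrt{n/k}$.
\end{itemize}
This reduces the task to choosing $k\ge 2$ with $k\lambda_k\gtrsim\tr N=\|M^{-1}\|_{\HS}^2$. Such a $k$ need not exist for slowly decaying spectra, where only a logarithmic loss is forced. So instead I would decompose dyadically. Let $I_\ell=\{i:\lambda_i\in(2^{-\ell-1}\lambda_1,2^{-\ell}\lambda_1]\}$, and pick the block that maximizes its contribution to $\tr N$ after weighting. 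Alternatively, combine the two regimes as follows.
\begin{itemize}
\item \textbf{Large stable rank.} Lemma~\ref{lem:5.2} gives $u^\top Nu\ge\tr N/(4n)$ except on an event of probability $Ce^{-cs}$.
\item \textbf{On the exceptional event.} Use the two-top-direction bound $(u^\top Nu)^{-1/2}\le(\lambda_2(\langle u,v_1\rangle^2+\langle u,v_2\rangle^2))^{-1/2}$, together with Cauchy--Schwarz against $\PP^{1/2}$. The $\chi^2_2$ variable has a finite $-1/2\cdot2=-1$ moment only logarithmically, so I would use Hölder with exponent $3/2$ instead.
\end{itemize}
The remaining difficulty is bounding $\lambda_2^{-1/2}$ against $e^{-cs}$. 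If $\lambda_2\ll\lambda_1$, this is again a small stable rank situation, where $\tr N\lesssim s\lambda_1$ and the single-top-direction Hölder estimate with a $(1-\theta)$ moment works. I expect the careful bookkeeping of this small-stable-rank/eigenvalue-gap case to be the hardest part.
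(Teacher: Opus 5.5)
There is a genuine gap. It sits exactly in the case you flag as hardest, and it is not a bookkeeping issue. On the good event $\mathcal D=\{u^\top Nu\ge \tr N/(4n)\}$ your argument is fine. Everywhere else, however, you take expectations of the random upper bound $s_{\min}(A)/\|M^{-\top}u_n(A)\|_2$. By independence its mean is exactly $\E[s_{\min}(A)]\,\E[(u^\top Nu)^{-1/2}]$, and this is not $O(\|M^{-1}\|_{\HS}^{-1})$ uniformly in $M$.

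For a counterexample, take $M^{-1}=\diag(1,\sqrt\epsilon,\dots,\sqrt\epsilon)$ with $n\epsilon=\delta<1/8$. Then $1\le\|M^{-1}\|_{\HS}^2\le 1+\delta$ and $\sr(M^{-1})<2$, but $u^\top Nu\le u_1^2+\epsilon$. Since $u_1$ has density $\gtrsim\sqrt n$ on $[-n^{-1/2},n^{-1/2}]$, you get $\E[(u^\top Nu)^{-1/2}\mathbf 1_{\{u_1^2\le 1/(8n)\}}]\gtrsim\sqrt n\log(1/\delta)$. Here $\{u_1^2\le 1/(8n)\}\subset\mathcal D^c$, so even the bad-event contribution is $\gtrsim\log(1/\delta)\to\infty$ after multiplying by $\E[s_{\min}(A)]\asymp n^{-1/2}$. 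Because the two factors are independent, no H\"older exponent, $(1-\theta)$-moment, two-direction bound, or dyadic spectral decomposition can rescue this. In the small-stable-rank, large-eigenvalue-gap regime you need an upper bound on $s_{\min}(MA)$ that does not pass through $s_{\min}(A)$ and $u_n(A)$ at all.

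The missing idea is the paper's row-distance bound. By the SVD and rotation invariance one may take $M=\diag(d_1,\dots,d_n)$ with $d_1=s_{\min}(M)$. Bounding $s_{\min}$ by the distance from the first row to the span of the others gives $s_{\min}(MA)\le d_1\dist(G_1,\Span\{G_2,\dots,G_n\})$. This is $d_1$ times the absolute value of a standard Gaussian, and $d_1=\sqrt{\sr(M^{-1})}/\|M^{-1}\|_{\HS}$.

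Your crude bound was one step away from this. Your test vector $y$ is $e_1$ in the singular basis, which produces the full norm $d_1\|G_1\|_2\approx d_1\sqrt n$. Minimizing $\|(MA)^\top x\|_2/\|x\|_2$ over all $x$ with $x_1=1$ instead replaces that norm by a distance to an independent hyperplane and removes the $\sqrt n$.

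The paper then splits the layer-cake integral at $t=\sr(M^{-1})$:
\begin{itemize}
\item for $1\le t\le \sr(M^{-1})$ it uses \cref{thm:5.1}, whose $e^{-c\,\sr(M^{-1})}$ term integrates to $O(\sr(M^{-1})e^{-c\,\sr(M^{-1})})=O(1)$;
\item for $t\ge\sr(M^{-1})$ it uses the row tail $2\exp(-ct^2/\sr(M^{-1}))\le 2e^{-ct}$.
\end{itemize}
With the row bound available, your own splitting also closes immediately. On $\mathcal D$ use $s_{\min}(MA)\le 2\sqrt n\,s_{\min}(A)/\|M^{-1}\|_{\HS}$. On $\mathcal D^c$ use Cauchy--Schwarz: $\E[s_{\min}(M)|g|\mathbf 1_{\mathcal D^c}]\le s_{\min}(M)\PP(\mathcal D^c)^{1/2}\lesssim \sqrt{\sr(M^{-1})}\,e^{-c\,\sr(M^{-1})}/\|M^{-1}\|_{\HS}$.
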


\begin{proof}
Let \(c>0\) denote an absolute constant, whose value may decrease from line
to line within this proof. First note that
\[
  s_{\min}(M) =
  \frac{\sqrt{\sr(M^{-1})}}{\|M^{-1}\|_{\HS}}.
\]

\medskip\noindent By the singular value decomposition and the rotational invariance of the Gaussian, it suffices to consider the case where \(M=D=\diag(d_1,\dots,d_n)\) is diagonal, and where \(d_1=s_{\min}(M)\). Let \(G_1,\dots,G_n\) denote the rows of a standard Gaussian matrix \(G\). Since the smallest singular value is bounded above by the distance of any row to the span of the remaining rows,
\[
  s_{\min}(DG)
  \le
  \dist(d_1G_1,\Span\{d_2G_2,\dots,d_nG_n\}).
\]
The scalars \(d_2,\dots,d_n\) are nonzero as we assumed that $D$ was invertible, so the span on the right is \(\Span\{G_2,\dots,G_n\}\). Conditionally on \(G_2,\dots,G_n\), this is an \((n-1)\)-dimensional subspace almost surely, and therefore if $z$ is the unit normal spanning \(\Span\{G_2,\dots,G_n\}^{\perp}\) then one has
\[
  \dist(G_1,\Span\{G_2,\dots,G_n\}) = \abs{\inner{G_1}{z}}
  \sim |g|,
  \qquad g\sim \mathcal N(0,1).
\]
Hence, for every \(t\ge0\),
\begin{equation}
\label{eq:row-tail-upper}
  \PP(s_{\min}(MA)>t)
  \le
  2\exp\left(-c\frac{t^2}{s_{\min}(M)^2}\right)
  =
  2\exp\left(-c\frac{t^2\|M^{-1}\|_{\HS}^2}{\sr(M^{-1})}\right).
\end{equation}

\medskip\noindent On the other hand, Theorem~\ref{thm:5.1} gives, for every
\(t\ge1\),
\begin{equation}
\label{eq:main-tail-for-expectation}
  \PP\!\left(
    s_{\min}(MA)> \frac{C\constsub{5.1}t}{\|M^{-1}\|_{\HS}}
  \right)
  \le
  C\constsub{5.1}e^{-c\constsub{5.1}t^2}
  +
  C\constsub{5.1}e^{-c\constsub{5.1}\sr(M^{-1})}.
\end{equation}

\medskip\noindent Using the layer-cake formula and splitting the integral at
\(t=\sr(M^{-1})\), we get
\[
\begin{aligned}
  \E[s_{\min}(MA)]
  &=
  \frac{C\constsub{5.1}}{\|M^{-1}\|_{\HS}}
  \int_0^\infty
  \PP\!\left(
    s_{\min}(MA)> \frac{C\constsub{5.1}t}{\|M^{-1}\|_{\HS}}
  \right)
  \,dt  \\
  &\le
  \frac{C\constsub{5.1}}{\|M^{-1}\|_{\HS}}
  \Biggl[
    1
    +
    \int_1^{\sr(M^{-1})}
      C\constsub{5.1}e^{-c\constsub{5.1}t^2}
    \,dt \\
  &\qquad\qquad
    +
    \int_1^{\sr(M^{-1})}
      C\constsub{5.1}e^{-c\constsub{5.1}\sr(M^{-1})}
    \,dt
    +
    \int_{\sr(M^{-1})}^\infty
      2\exp\left(
        -cC\constsub{5.1}^{\,2}t
      \right)
    \,dt
  \Biggr].
\end{aligned}
\]
One may verify that the integrals are all at most absolute constants, giving the desired result.
\end{proof}

\medskip\noindent Now, using \cref{thm:1.1} we may conclude that when $A$ is a Gaussian matrix with standard i.i.d. entries that
\[
\E s_{\min}(MA) \asymp \norm{M^{-1}}_{\HS}^{-1}
\]

\medskip\noindent For the remainder of the paper, we will also show that one may translate another classical result about Gaussian random matrices to the case of $MA$. \cref{prop:5.4} shows that the lower tail for the smallest singular value of $MA$ is at least linear in $\varepsilon$ for small enough $\varepsilon$, when $A$ is a standard Gaussian matrix.

\begin{proposition} \label{prop:5.4}
Let \(A\) be an \(n\times n\) random matrix with i.i.d.\ \(\mathcal N(0,1)\)
entries, and let \(M\) be a fixed invertible \(n\times n\) matrix. Then there
exist absolute constants \(c\constsub{5.4},\eps\constsub{5.4}>0\) such that for
every \(n\ge2\) and every \(0\le\varepsilon\le \eps\constsub{5.4}\),
\[
  \PP\!\left(
    s_{\min}(MA)\le \frac{\varepsilon}{\|M^{-1}\|_{\HS}}
  \right)
  \ge c\constsub{5.4}\varepsilon.
\]
\end{proposition}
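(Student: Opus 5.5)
The plan is to reuse the reduction from the proof of \cref{thm:5.1}, now in the lower-tail direction. Let \(u=u_n(A)\) be a unit left singular vector of \(A\) for \(s_{\min}(A)\), and set \(y=M^{-\top}u/\|M^{-\top}u\|_2\). As in that proof,
\[
  s_{\min}(MA)\le \|(MA)^\top y\|_2=\frac{s_{\min}(A)}{\|M^{-\top}u\|_2}.
\]
Define the normalized random variable
\[
  R=\frac{\sqrt n\,\|M^{-\top}u\|_2}{\|M^{-1}\|_{\HS}}.
\]
Then the event \(\{\sqrt n\, s_{\min}(A)\le \varepsilon R\}\) is contained in \(\{s_{\min}(MA)\le \varepsilon/\|M^{-1}\|_{\HS}\}\). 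By Gaussian rotational invariance, which was already used in \cref{thm:5.1}, \(u\) is uniform on \(S^{n-1}\) and independent of \(s_{\min}(A)\). Writing \(F(s)=\PP(\sqrt n\,s_{\min}(A)\le s)\), this gives
\[
  \PP\Bigl(s_{\min}(MA)\le \tfrac{\varepsilon}{\|M^{-1}\|_{\HS}}\Bigr)\ge \E[F(\varepsilon R)].
\]

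The second step is the classical Gaussian lower-tail estimate, which is the counterpart of \eqref{eq:5.1}. It says there are absolute constants \(c_0,s_0>0\) such that \(F(s)\ge c_0 s\) for all \(0\le s\le s_0\) and all \(n\ge2\). This follows from Edelman's explicit density of \(n\,s_{\min}(A)^2\), which is bounded below near \(0\) uniformly in \(n\), or equivalently from the known sharpness of \cref{eq:1.1} in the Gaussian case. I expect this to be the main obstacle. The uniformity in \(n\ge2\) has to be extracted carefully from Edelman's formula; the small values of \(n\) can be checked directly, since the density of \(s_{\min}\) is positive near zero.

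The third step is to show that \(R\) is not too small with constant probability, uniformly in \(M\). Put \(N=M^{-1}M^{-\top}\). Then \(R^2=n\,u^\top Nu/\tr(N)\), so \(\E R^2=1\). For \(u\) uniform on the sphere,
\[
  \E[(u^\top Nu)^2]=\frac{(\tr N)^2+2\tr(N^2)}{n(n+2)}\le \frac{3(\tr N)^2}{n^2},
\]
hence \(\E R^4\le 3\). Paley--Zygmund then gives \(\PP(R^2\ge 1/2)\ge \tfrac{(1/2)^2}{3}=\tfrac1{12}\). This step deliberately avoids \cref{lem:5.2}, whose bound degrades when the stable rank is small.

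Finally, choose \(\varepsilon_{5.4}=s_0\). For \(\varepsilon\le\varepsilon_{5.4}\), restrict to the event \(\{1/\sqrt2\le R\}\). On this event \(F\) is monotone, so \(F(\varepsilon R)\ge F(\varepsilon/\sqrt2)\ge c_0\varepsilon/\sqrt2\), using \(\varepsilon/\sqrt2\le s_0\). Combining with the previous steps,
\[
  \E[F(\varepsilon R)]\ge \tfrac{1}{12}\cdot\tfrac{c_0}{\sqrt2}\,\varepsilon,
\]
which gives the claim with \(c_{5.4}=c_0/(12\sqrt2)\).
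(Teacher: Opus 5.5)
Your proposal is correct and follows essentially the same route as the paper. Both use the reduction $s_{\min}(MA)\le s_{\min}(A)/\|M^{-\top}u\|_2$, the independence of the uniform vector $u$ from $s_{\min}(A)$, the moment identity with Paley--Zygmund giving probability at least $1/12$, and Edelman's linear lower tail at $\varepsilon/\sqrt2$. Writing the bound as $\E[F(\varepsilon R)]$ instead of factoring the joint probability directly is only a cosmetic difference.
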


\begin{proof}
Set \(N=M^{-1}M^{-\top}\), and let \(u=u_n(A)\) be a unit left singular vector
corresponding to \(s_{\min}(A)\). As in the proof of \cref{thm:5.1},
\[
  s_{\min}(MA)\le \frac{s_{\min}(A)}{\|M^{-\top}u\|_2}.
\]
\medskip\noindent
By Gaussian rotational invariance again we see that $u$ is uniform on the sphere and independent of $s_{\min}(A)$. To ease notation, let us define
\[
  X=\|M^{-\top}u\|_2^2=u^\top N u.
\]
Then
\[
  \E[X]=\frac{\tr(N)}{n}=\frac{\|M^{-1}\|_{\HS}^2}{n}.
\]
\medskip\noindent
If \(\lambda_1,\dots,\lambda_n\) are the eigenvalues of \(N\), then by
standard fourth-moment identities we see
\[
\begin{aligned}
  \E[X^2]
  &=
  \sum_{i=1}^n \lambda_i^2\E[u_i^4]
  +
  \sum_{i\ne j}\lambda_i\lambda_j\E[u_i^2u_j^2] \\
  &=
  \frac{3\sum_{i=1}^n\lambda_i^2+\sum_{i\ne j}\lambda_i\lambda_j}{n(n+2)}
  =
  \frac{(\tr N)^2+2\tr(N^2)}{n(n+2)} \\
  &\le
  \frac{3(\tr N)^2}{n(n+2)}
  \le
  3(\E[X])^2.
\end{aligned}
\]
\medskip\noindent
Therefore Paley--Zygmund gives
\[
  \PP\!\left(X\ge \frac12\,\E[X]\right)
  \ge
  \frac{(1-1/2)^2(\E[X])^2}{\E[X^2]}
  \ge
  \frac{1}{12}.
\]
\medskip\noindent
Equivalently,
\[
  \PP\!\left(
    \|M^{-\top}u\|_2\ge \frac{\|M^{-1}\|_{\HS}}{\sqrt{2n}}
  \right)
  \ge
  \frac{1}{12}.
\]
\medskip\noindent
Hence
\[
\begin{aligned}
  \PP\!\left(
    s_{\min}(MA)\le \frac{\varepsilon}{\|M^{-1}\|_{\HS}}
  \right)
  &\ge
  \PP\!\left(
    \|M^{-\top}u\|_2\ge \frac{\|M^{-1}\|_{\HS}}{\sqrt{2n}},
    \,
    s_{\min}(A)\le \frac{\varepsilon}{\sqrt{2n}}
  \right) \\
  &=
  \PP\!\left(
    \|M^{-\top}u\|_2\ge \frac{\|M^{-1}\|_{\HS}}{\sqrt{2n}}
  \right)
  \PP\!\left(
    s_{\min}(A)\le \frac{\varepsilon}{\sqrt{2n}}
  \right) \\
  &\ge
  \frac{1}{12}\,
  \PP\!\left(
    s_{\min}(A)\le \frac{\varepsilon}{\sqrt{2n}}
  \right).
\end{aligned}
\]
\medskip\noindent
Estimates going back to
Edelman \cite{Edelman88,Edelman91}, imply that there exists
an absolute constant \(c>0\) such that
\[
  \PP\!\left(
    s_{\min}(A)\le \frac{t}{\sqrt n}
  \right)
  \ge
  c t,
  \qquad 0\le t\le c.
\]
Applying this with \(t=\varepsilon/\sqrt2\) gives the claim, after changing the
absolute constants.
\end{proof}

\medskip\noindent \Cref{prop:5.expectation-upper,prop:5.4} show that \(\|M^{-1}\|_{\HS}^{-1}\) remains the correct
Gaussian scale both for the expectation and for the lower tail at small
\(\varepsilon\), even when \(\sr(M^{-1})\) is small. \cref{thm:5.1} gives a much stronger high-probability statement when
\(\sr(M^{-1})\) is large, because then the residual term
\(e^{-c\constsub{5.1}\sr(M^{-1})}\) doesn't matter. When \(\sr(M^{-1})\) is
bounded by an absolute constant,
however, one should not expect an overwhelming-probability upper bound at the
same scale. \cref{prop:5.5} gives an example showing that the stable rank term is sometimes necessary: if $1 \le r \le n - 1$ then
there are examples of $M$ for which \(r\le \sr(M^{-1})\le 2r\) and the event
\[
  s_{\min}(MA)\le \frac{C}{\|M^{-1}\|_{\HS}}
\]
fails with probability at least \(e^{-cr}\).

\begin{proposition} \label{prop:5.5}
Fix \(C_0>0\). There exists \(K\constsub{5.5}>0\), depending only on \(C_0\), such that for every \(n\ge2\) and every \(1\le r\le n-1\), there is an invertible diagonal matrix \(M\) satisfying
\[
  r\le \sr(M^{-1})\le 2r
\]
and
\[
  \PP\!\left(s_{\min}(MA)> \frac{C_0}{\|M^{-1}\|_{\HS}}\right)
  \ge e^{-K\constsub{5.5}r}.
\]
\end{proposition}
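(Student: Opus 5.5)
The plan is to take $M$ diagonal so that $M^{-1}$ has exactly $r$ entries of size $1$ and $n-r$ entries of a tiny size $s$, and then send $s\to0$. In that limit $s_{\min}(MA)$ becomes the smallest singular value of an $r\times r$ Gaussian matrix, whose upper tail at scale $C_0/\sqrt r$ is only Gaussian in $C_0$ and does not depend on $r$ or $n$.

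\textbf{Construction.} Set $M^{-1}=\diag(1,\dots,1,s,\dots,s)$ with $r$ ones, where $0<s\le 1$ satisfies $(n-r)s^2\le r$. Then $\|M^{-1}\|_{\op}=1$ and $\|M^{-1}\|_{\HS}^2=r+(n-r)s^2\in[r,2r]$, so $r\le \sr(M^{-1})\le 2r$. Also
\[
\frac{C_0}{\|M^{-1}\|_{\HS}}\le \frac{C_0}{\sqrt r}.
\]
Hence it suffices to show that $\PP\bigl(s_{\min}(MA)>C_0/\sqrt r\bigr)\ge e^{-Kr}$ for some $s$ small enough, where $s$ may depend on $n$ and $r$.

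\textbf{Identifying the limit.} Split $A$ into its top $r$ rows $A_1$ (an $r\times n$ matrix) and its bottom $n-r$ rows $A_2$. For a unit vector $x$,
\[
\|MAx\|_2^2=\|A_1x\|_2^2+s^{-2}\|A_2x\|_2^2 .
\]
Almost surely $A_2$ has full rank $n-r$, so $V=\ker A_2$ has dimension $r$. I claim that as $s\downarrow0$,
\[
s_{\min}(MA)\to \sigma:=\min_{x\in V\cap S^{n-1}}\|A_1x\|_2 \quad\text{almost surely.}
\]
The upper bound $s_{\min}(MA)\le\sigma$ is immediate by restricting to $x\in V$. For the lower bound, fix a sample. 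If $\|A_2x\|_2\ge\eta$, the quadratic form is at least $s^{-2}\eta^2$, which is large. If $\|A_2x\|_2<\eta$, then $x$ lies within $O(\eta/s_{\min}(A_2^\top))$ of $V$, so $\|A_1x\|_2\ge\sigma-O(\eta\|A_1\|_{\op}/s_{\min}(A_2^\top))$. Letting $s\to0$ and then $\eta\to0$ gives the claim. Since $\{\sigma>C_0/\sqrt r\}$ is an open condition in the limit, Fatou's lemma gives
\[
\liminf_{s\to0}\PP\bigl(s_{\min}(MA)>C_0/\sqrt r\bigr)\ge \PP\bigl(\sigma>C_0/\sqrt r\bigr),
\]
so a sufficiently small $s$ yields at least half of the right-hand side.

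\textbf{Distribution of $\sigma$.} Since $V$ depends only on $A_2$ and is independent of $A_1$, condition on $V$ and pick an orthonormal basis $Q$ ($n\times r$) of $V$. By rotation invariance $A_1Q$ is an $r\times r$ matrix with i.i.d.\ $\mathcal N(0,1)$ entries, and $\sigma=s_{\min}(A_1Q)$. The remaining step, which I expect to be the main obstacle, is the lower bound on the upper tail,
\[
\PP\bigl(s_{\min}(G)>t/\sqrt r\bigr)\ge c\,e^{-Ct^2}\qquad\text{for an } r\times r \text{ Gaussian } G,
\]
uniformly in $r$, taken at $t=C_0$. I would take this from Edelman's explicit density of the smallest singular value~\cite{Edelman88,Edelman91}, in the same way the paper already borrows \eqref{eq:5.1} and the linear lower tail from that source. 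Failing that, I would use Edelman's exact formula for the distribution of $r\,s_{\min}(G)^2$ to read off a positive tail at level $C_0^2$. Even the crude bound $\PP(\cdot)\ge e^{-K'r}$ would suffice, because the claimed estimate only requires $e^{-Kr}$.

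\textbf{Conclusion.} Combining the three steps gives $\PP\bigl(s_{\min}(MA)>C_0/\|M^{-1}\|_{\HS}\bigr)\ge \tfrac12 c\,e^{-CC_0^2}\ge e^{-K\constsub{5.5}r}$ for $r\ge1$, with $K\constsub{5.5}$ depending only on $C_0$.
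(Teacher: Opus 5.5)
Your argument is correct, and it shares its skeleton with the paper's proof. Up to a scalar (the statement is invariant under $M\mapsto\lambda M$), your $M$ is the paper's $M=\diag(a/n,\dots,a/n,1,\dots,1)$ with $s=a/n$. Your matrix $A_1Q$ is exactly the transpose of the paper's block $G_{11}$, because the paper's $H^\perp$ is $\ker A_2=V$. Both proofs also end with the same appeal to Edelman for a lower bound on $\PP(s_{\min}(G_r)>C_0/\sqrt r)$.

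The genuine difference is how you decouple this $r\times r$ block from the remaining $n-r$ rows. The paper fixes the ratio at $n/a$ with $a=a(C_0)$ and argues quantitatively. It puts $(MA)^\top$ in block lower-triangular form and intersects with the events $\{\|G_{21}\|_{\op}\le 2\sqrt n\}$ and $\{s_{\min}(G_{22})\gtrsim a/\sqrt n\}$, the latter from Rudelson--Vershynin's rectangular estimates. It then splits into cases according to $\|x'\|_2$. You instead let $s\downarrow 0$, note that $s_{\min}(M_sA)$ increases to $\sigma$ almost surely, and pass to the limit by monotone convergence. This way you need neither the operator-norm bound nor the rectangular smallest-singular-value input, only the qualitative facts $\|A_1\|_{\op}<\infty$ and $s_{\min}(A_2^\top)>0$ almost surely.

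What you give up is explicitness. Your $s$ depends on $n$, $r$ and $C_0$ through an unquantified limit. That is harmless for an existence statement but less informative than the paper's $a(C_0)/n$. What you gain, besides simplicity, is transparency: your bound reads $\tfrac12\PP(\sqrt r\,s_{\min}(G_r)>C_0)$. By Edelman's limit law this is bounded below by a constant depending only on $C_0$, uniformly in $r$. So for this construction the required $e^{-K r}$ is a weak target.
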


\begin{proof}
Choose \(a\constsub{5.5}>0\) small enough depending only on \(C_0\), and put
\[
  M^{-1}
  =
  \diag(\underbrace{n/a\constsub{5.5},\dots,n/a\constsub{5.5}}_{r\text{ times}},
  1,\dots,1).
\]
Then
\[
  \sr(M^{-1})
  =
  \frac{r n^2/a\constsub{5.5}^2+n-r}{n^2/a\constsub{5.5}^2}
  =
  r+\frac{a\constsub{5.5}^2(n-r)}{n^2}.
\]
Taking \(a\constsub{5.5}\le1\), we have
\[
  r\le \sr(M^{-1})\le 2r.
\]
Moreover,
\[
  \frac{1}{\|M^{-1}\|_{\HS}}
  =
  \frac{1}{\sqrt{r n^2/a\constsub{5.5}^2+n-r}}
  \le
  \frac{a\constsub{5.5}}{n\sqrt r}.
\]

\medskip\noindent Let \(X_i^\top\) denote the \(i\)-th row of \(A\). Then
\[
  (MA)^\top
  =
  \bigl[
    (a\constsub{5.5}/n)X_1,\dots,(a\constsub{5.5}/n)X_r,
    X_{r+1},\dots,X_n
  \bigr].
\]
Let
\[
  H=\Span\{X_{r+1},\dots,X_n\}.
\]
Almost surely, \(\dim H=n-r\). Choose an orthonormal basis \(Z\) for
\(H^\perp\) and an orthonormal basis \(U\) for \(H\). Let \(O=[Z\ U]\).
Multiplying on the left by \(O^\top\), which preserves singular values, gives us the following representation
\[
  \begin{pmatrix}
    (a\constsub{5.5}/n)G_{11} & 0 \\
    (a\constsub{5.5}/n)G_{21} & G_{22}
  \end{pmatrix},
\]
where \(G_{11}\) is an \(r\times r\) standard Gaussian matrix, \(G_{21}\) is an
\((n-r)\times r\) standard Gaussian matrix, and the singular values of
\(G_{22}\) are precisely the nonzero singular values of the \(n\times(n-r)\)
Gaussian matrix
\[
  [X_{r+1}\,\cdots\,X_n].
\]
Conditionally on \(X_{r+1},\dots,X_n\), the matrices \(G_{11}\) and \(G_{21}\)
are independent standard Gaussian blocks and are independent of \(G_{22}\).

\medskip\noindent Let
\[
  \mathcal E
  =
  \left\{
    s_{\min}(G_{11})\ge \frac{4(C_0+1)}{\sqrt r}
  \right\}
  \cap
  \left\{
    \|G_{21}\|_{\op}\le 2\sqrt n
  \right\}
  \cap
  \left\{
    s_{\min}(G_{22})\ge
    \frac{8(C_0+1)a\constsub{5.5}}{\sqrt n}
  \right\}.
\]

\medskip\noindent By Edelman's estimates for square Gaussian matrices
\cite{Edelman88,Edelman91}, there is \(C=C(C_0)>0\) such that
\[
  \PP\!\left(
    s_{\min}(G_{11})\ge \frac{4(C_0+1)}{\sqrt r}
  \right)
  \ge e^{-Cr}.
\]
A standard estimate, see
\cite{Vershynin18}, gives an absolute constant \(c>0\) such
that
\[
  \PP\!\left(\|G_{21}\|_{\op}\le 2\sqrt n\right)
  \ge 1-e^{-cn}.
\]
Since \(G_{22}\) has the nonzero singular values of an \(n\times(n-r)\)
Gaussian matrix, rectangular smallest singular value estimates (see \cite{RV09})
implies that, for \(a\constsub{5.5}\) small enough depending only on \(C_0\),
there is an absolute constant \(c>0\) such that
\[
  \PP\!\left(
    s_{\min}(G_{22})\ge
    \frac{8(C_0+1)a\constsub{5.5}}{\sqrt n}
  \right)
  \ge c.
\]
Therefore, after increasing \(K\constsub{5.5}\) depending only on \(C_0\),
\[
  \PP(\mathcal E)\ge e^{-K\constsub{5.5}r}.
\]
We now lower bound \(s_{\min}(MA)\) on \(\mathcal E\). Let
\(x=(x_0,x')\in S^{n-1}\), where \(x_0\in\R^r\) and
\(x'\in\R^{n-r}\). If \(\|x'\|_2\le 1/2\), then
\(\|x_0\|_2\ge \sqrt3/2\), and hence
\[
  \|(MA)^\top x\|_2
  \ge
  \frac{a\constsub{5.5}}{n}\|G_{11}x_0\|_2
  \ge
  \frac{2(C_0+1)a\constsub{5.5}}{n\sqrt r}.
\]
If \(\|x'\|_2\ge 1/2\), then
\[
  \|(MA)^\top x\|_2
  \ge
  \|G_{22}x'\|_2
  -
  \frac{a\constsub{5.5}}{n}\|G_{21}\|_{\op}\|x_0\|_2.
\]
On \(\mathcal E\), this gives
\[
  \|(MA)^\top x\|_2
  \ge
  \frac{8(C_0+1)a\constsub{5.5}}{\sqrt n}\cdot\frac12
  -
  \frac{a\constsub{5.5}}{n}\cdot 2\sqrt n
  =
  \frac{(4C_0+2)a\constsub{5.5}}{\sqrt n}
  \ge
  \frac{2C_0a\constsub{5.5}}{n\sqrt r}.
\]
Thus, in both cases,
\[
  \|(MA)^\top x\|_2
  \ge
  \frac{2C_0a\constsub{5.5}}{n\sqrt r}.
\]
Taking the infimum over \(x\in S^{n-1}\), we obtain
\[
  s_{\min}(MA)
  =
  s_{\min}((MA)^\top)
  \ge
  \frac{2C_0a\constsub{5.5}}{n\sqrt r}.
\]
Since
\[
  \frac{1}{\|M^{-1}\|_{\HS}}
  \le \frac{a\constsub{5.5}}{n\sqrt r},
\]
we have
\[
  s_{\min}(MA)> \frac{C_0}{\|M^{-1}\|_{\HS}}
\]
on \(\mathcal E\). Therefore
\[
  \PP\!\left(s_{\min}(MA)> \frac{C_0}{\|M^{-1}\|_{\HS}}\right)
  \ge
  \PP(\mathcal E)
  \ge
  e^{-K\constsub{5.5}r}.
\]
This proves the proposition.
\end{proof}

\bibliographystyle{amsplain}
\bibliography{main}

@article {AGLPT08,
    AUTHOR = {Adamczak, Rados\l aw and Gu\'edon, Olivier and Litvak,
              Alexander and Pajor, Alain and Tomczak-Jaegermann, Nicole},
     TITLE = {Smallest singular value of random matrices with independent
              columns},
   JOURNAL = {C. R. Math. Acad. Sci. Paris},
  FJOURNAL = {Comptes Rendus Math\'ematique. Acad\'emie des Sciences. Paris},
    VOLUME = {346},
      YEAR = {2008},
    NUMBER = {15-16},
     PAGES = {853--856},
      ISSN = {1631-073X,1778-3569},
   MRCLASS = {15A42 (15A18 46B09 60G70)},
  MRNUMBER = {2441920},
MRREVIEWER = {Werner\ Linde},
       DOI = {10.1016/j.crma.2008.07.011},
       URL = {https://doi.org/10.1016/j.crma.2008.07.011},
}

@article {BR17,
    AUTHOR = {Basak, Anirban and Rudelson, Mark},
     TITLE = {Invertibility of sparse non-{H}ermitian matrices},
   JOURNAL = {Adv. Math.},
  FJOURNAL = {Advances in Mathematics},
    VOLUME = {310},
      YEAR = {2017},
     PAGES = {426--483},
      ISSN = {0001-8708,1090-2082},
   MRCLASS = {60B20 (46B09)},
  MRNUMBER = {3620692},
MRREVIEWER = {Thomas\ Kriecherbauer},
       DOI = {10.1016/j.aim.2017.02.009},
       URL = {https://doi.org/10.1016/j.aim.2017.02.009},
}

@misc{DF24,
  AUTHOR = {Dabagia, Max and Fern{\'a}ndez, Manuel},
  TITLE  = {The smallest singular value of inhomogeneous random rectangular matrices},
  YEAR   = {2024},
  NOTE   = {Preprint, available at \href{https://arxiv.org/abs/2408.14389}{arXiv:2408.14389}},
}

@article {Edelman88,
    AUTHOR = {Edelman, Alan},
     TITLE = {Eigenvalues and condition numbers of random matrices},
   JOURNAL = {SIAM J. Matrix Anal. Appl.},
  FJOURNAL = {SIAM Journal on Matrix Analysis and Applications},
    VOLUME = {9},
      YEAR = {1988},
    NUMBER = {4},
     PAGES = {543--560},
      ISSN = {0895-4798},
   MRCLASS = {15A52 (62H10)},
  MRNUMBER = {964668},
MRREVIEWER = {D.\ S.\ Tracy},
       DOI = {10.1137/0609045},
       URL = {https://doi.org/10.1137/0609045},
}

@article {Edelman91,
    AUTHOR = {Edelman, Alan},
     TITLE = {The distribution and moments of the smallest eigenvalue of a
              random matrix of {W}ishart type},
   JOURNAL = {Linear Algebra Appl.},
  FJOURNAL = {Linear Algebra and its Applications},
    VOLUME = {159},
      YEAR = {1991},
     PAGES = {55--80},
      ISSN = {0024-3795,1873-1856},
   MRCLASS = {62E15 (62H10)},
  MRNUMBER = {1133335},
MRREVIEWER = {P.\ N.\ Rathie},
       DOI = {10.1016/0024-3795(91)90076-9},
       URL = {https://doi.org/10.1016/0024-3795(91)90076-9},
}

@article {GLPTJ17,
    AUTHOR = {Gu\'edon, Olivier and Litvak, Alexander E. and Pajor, Alain
              and Tomczak-Jaegermann, Nicole},
     TITLE = {On the interval of fluctuation of the singular values of
              random matrices},
   JOURNAL = {J. Eur. Math. Soc. (JEMS)},
  FJOURNAL = {Journal of the European Mathematical Society (JEMS)},
    VOLUME = {19},
      YEAR = {2017},
    NUMBER = {5},
     PAGES = {1469--1505},
      ISSN = {1435-9855,1435-9863},
   MRCLASS = {60B20 (15B52 46B09 60E15)},
  MRNUMBER = {3635358},
       DOI = {10.4171/JEMS/697},
       URL = {https://doi.org/10.4171/JEMS/697},
}

@article {LPRT05,
    AUTHOR = {Litvak, A. E. and Pajor, A. and Rudelson, M. and
              Tomczak-Jaegermann, N.},
     TITLE = {Smallest singular value of random matrices and geometry of
              random polytopes},
   JOURNAL = {Adv. Math.},
  FJOURNAL = {Advances in Mathematics},
    VOLUME = {195},
      YEAR = {2005},
    NUMBER = {2},
     PAGES = {491--523},
      ISSN = {0001-8708,1090-2082},
   MRCLASS = {52A22 (15A52 46B09 60D05)},
  MRNUMBER = {2146352},
MRREVIEWER = {B\'ela\ Uhrin},
       DOI = {10.1016/j.aim.2004.08.004},
       URL = {https://doi.org/10.1016/j.aim.2004.08.004},
}

@article {LR12,
    AUTHOR = {Litvak, Alexander E. and Rivasplata, Omar},
     TITLE = {Smallest singular value of sparse random matrices},
   JOURNAL = {Studia Math.},
  FJOURNAL = {Studia Mathematica},
    VOLUME = {212},
      YEAR = {2012},
    NUMBER = {3},
     PAGES = {195--218},
      ISSN = {0039-3223,1730-6337},
   MRCLASS = {60B20 (15B52)},
  MRNUMBER = {3009072},
MRREVIEWER = {Anna\ Lytova},
       DOI = {10.4064/sm212-3-1},
       URL = {https://doi.org/10.4064/sm212-3-1},
}

@article {Livshyts21,
    AUTHOR = {Livshyts, Galyna V.},
     TITLE = {The smallest singular value of heavy-tailed not necessarily
              i.i.d. random matrices via random rounding},
   JOURNAL = {J. Anal. Math.},
  FJOURNAL = {Journal d'Analyse Math\'ematique},
    VOLUME = {145},
      YEAR = {2021},
    NUMBER = {1},
     PAGES = {257--306},
      ISSN = {0021-7670,1565-8538},
   MRCLASS = {60B20},
  MRNUMBER = {4361906},
MRREVIEWER = {Khanh\ Duy\ Trinh},
       DOI = {10.1007/s11854-021-0183-2},
       URL = {https://doi.org/10.1007/s11854-021-0183-2},
}

@article {LTV21,
    AUTHOR = {Livshyts, Galyna V. and Tikhomirov, Konstantin and Vershynin,
              Roman},
     TITLE = {The smallest singular value of inhomogeneous square random
              matrices},
   JOURNAL = {Ann. Probab.},
  FJOURNAL = {The Annals of Probability},
    VOLUME = {49},
      YEAR = {2021},
    NUMBER = {3},
     PAGES = {1286--1309},
      ISSN = {0091-1798,2168-894X},
   MRCLASS = {60B20 (15B52)},
  MRNUMBER = {4255145},
MRREVIEWER = {Asad\ Lodhia},
       DOI = {10.1214/20-aop1481},
       URL = {https://doi.org/10.1214/20-aop1481},
}

@article {RT18,
    AUTHOR = {Rebrova, Elizaveta and Tikhomirov, Konstantin},
     TITLE = {Coverings of random ellipsoids, and invertibility of matrices
              with i.i.d. heavy-tailed entries},
   JOURNAL = {Israel J. Math.},
  FJOURNAL = {Israel Journal of Mathematics},
    VOLUME = {227},
      YEAR = {2018},
    NUMBER = {2},
     PAGES = {507--544},
      ISSN = {0021-2172,1565-8511},
   MRCLASS = {60B20 (15B52 60D05)},
  MRNUMBER = {3846333},
MRREVIEWER = {Yuliy\ M.\ Baryshnikov},
       DOI = {10.1007/s11856-018-1732-y},
       URL = {https://doi.org/10.1007/s11856-018-1732-y},
}

@article {RV08b,
    AUTHOR = {Rudelson, Mark and Vershynin, Roman},
     TITLE = {The least singular value of a random square matrix is
              {$O(n^{-1/2})$}},
   JOURNAL = {C. R. Math. Acad. Sci. Paris},
  FJOURNAL = {Comptes Rendus Math\'ematique. Acad\'emie des Sciences. Paris},
    VOLUME = {346},
      YEAR = {2008},
    NUMBER = {15-16},
     PAGES = {893--896},
      ISSN = {1631-073X,1778-3569},
   MRCLASS = {60G70 (15A18 15A42 15A52 60E15)},
  MRNUMBER = {2441928},
       DOI = {10.1016/j.crma.2008.07.009},
       URL = {https://doi.org/10.1016/j.crma.2008.07.009},
}

@article {RV08,
    AUTHOR = {Rudelson, Mark and Vershynin, Roman},
     TITLE = {The {L}ittlewood-{O}fford problem and invertibility of random
              matrices},
   JOURNAL = {Adv. Math.},
  FJOURNAL = {Advances in Mathematics},
    VOLUME = {218},
      YEAR = {2008},
    NUMBER = {2},
     PAGES = {600--633},
      ISSN = {0001-8708,1090-2082},
   MRCLASS = {60E15 (60B20)},
  MRNUMBER = {2407948},
MRREVIEWER = {Ben\ Joseph\ Green},
       DOI = {10.1016/j.aim.2008.01.010},
       URL = {https://doi.org/10.1016/j.aim.2008.01.010},
}

@article {RV09,
    AUTHOR = {Rudelson, Mark and Vershynin, Roman},
     TITLE = {Smallest singular value of a random rectangular matrix},
   JOURNAL = {Comm. Pure Appl. Math.},
  FJOURNAL = {Communications on Pure and Applied Mathematics},
    VOLUME = {62},
      YEAR = {2009},
    NUMBER = {12},
     PAGES = {1707--1739},
      ISSN = {0010-3640,1097-0312},
   MRCLASS = {60B20 (15A42 15B52 60E15)},
  MRNUMBER = {2569075},
MRREVIEWER = {Mark\ W.\ Meckes},
       DOI = {10.1002/cpa.20294},
       URL = {https://doi.org/10.1002/cpa.20294},
}

@article {RV15,
    AUTHOR = {Rudelson, Mark and Vershynin, Roman},
     TITLE = {Small ball probabilities for linear images of high-dimensional
              distributions},
   JOURNAL = {Int. Math. Res. Not. IMRN},
  FJOURNAL = {International Mathematics Research Notices. IMRN},
      YEAR = {2015},
    NUMBER = {19},
     PAGES = {9594--9617},
      ISSN = {1073-7928,1687-0247},
   MRCLASS = {60E15},
  MRNUMBER = {3431603},
MRREVIEWER = {Mikhail\ A.\ Lifshits},
       DOI = {10.1093/imrn/rnu243},
       URL = {https://doi.org/10.1093/imrn/rnu243},
}

@article {Szarek91,
    AUTHOR = {Szarek, Stanis\l aw J.},
     TITLE = {Condition numbers of random matrices},
   JOURNAL = {J. Complexity},
  FJOURNAL = {Journal of Complexity},
    VOLUME = {7},
      YEAR = {1991},
    NUMBER = {2},
     PAGES = {131--149},
      ISSN = {0885-064X,1090-2708},
   MRCLASS = {65F35 (60F99 68Q25)},
  MRNUMBER = {1108773},
MRREVIEWER = {M.\ Z.\ Nashed},
       DOI = {10.1016/0885-064X(91)90002-F},
       URL = {https://doi.org/10.1016/0885-064X(91)90002-F},
}

@article {TV09,
    AUTHOR = {Tao, Terence and Vu, Van H.},
     TITLE = {Inverse {L}ittlewood-{O}fford theorems and the condition
              number of random discrete matrices},
   JOURNAL = {Ann. of Math. (2)},
  FJOURNAL = {Annals of Mathematics. Second Series},
    VOLUME = {169},
      YEAR = {2009},
    NUMBER = {2},
     PAGES = {595--632},
      ISSN = {0003-486X,1939-8980},
   MRCLASS = {60G50 (15B52 60E15 60F05)},
  MRNUMBER = {2480613},
MRREVIEWER = {Michael\ Stolz},
       DOI = {10.4007/annals.2009.169.595},
       URL = {https://doi.org/10.4007/annals.2009.169.595},
}

@article {Vershynin11,
    AUTHOR = {Vershynin, Roman},
     TITLE = {Spectral norm of products of random and deterministic
              matrices},
   JOURNAL = {Probab. Theory Related Fields},
  FJOURNAL = {Probability Theory and Related Fields},
    VOLUME = {150},
      YEAR = {2011},
    NUMBER = {3-4},
     PAGES = {471--509},
      ISSN = {0178-8051,1432-2064},
   MRCLASS = {60B20 (60E15)},
  MRNUMBER = {2824864},
MRREVIEWER = {Rajat\ Subhra\ Hazra},
       DOI = {10.1007/s00440-010-0281-z},
       URL = {https://doi.org/10.1007/s00440-010-0281-z},
}

@book {Vershynin18,
    AUTHOR = {Vershynin, Roman},
     TITLE = {High-dimensional probability},
    SERIES = {Cambridge Series in Statistical and Probabilistic Mathematics},
    VOLUME = {47},
      NOTE = {An introduction with applications in data science,
              With a foreword by Sara van de Geer},
 PUBLISHER = {Cambridge University Press, Cambridge},
      YEAR = {2018},
     PAGES = {xiv+284},
      ISBN = {978-1-108-41519-4},
   MRCLASS = {60-01 (60B05 60B20 60E15 60Fxx 62H25)},
  MRNUMBER = {3837109},
MRREVIEWER = {Sasha\ Sodin},
       DOI = {10.1017/9781108231596},
       URL = {https://doi.org/10.1017/9781108231596},
}
\end{document}